\theoremstyle{plain}
 \newtheorem{teo}{Theorem}
  \newtheorem{lem}[teo]{Lemma}
  \theoremstyle{remark}
  \newtheorem{rem}[teo]{Remark}
  \newtheorem{ex}[teo]{Example}
  \newtheorem{question}[teo]{Question}
\theoremstyle{definition}
    \newtheorem{defi}[teo]{Definition}
\def\t{\triangleleft}
\def\ot{\otimes}
\def\wt{\widetilde}
\def\flip{\mathrm{flip}}
\def\Id{\mathrm{Id}}
\def\id{\mathrm{Id}}
\def\F{\mathbb{F}}
\def\R{\mathbb{R}}
\def\N{\mathbb{N}}
\def\Z{\mathbb{Z}}
\def\Om{\Omega}
\def\ra{\overline}
\def\m1{^{ \hbox{\small{-}}1}}
\title{Universal  cocycle 
Invariants for singular knots and links}
\author{Marco  Farinati\thanks{Member of CONICET. Partially supported by
PIP 11220110100800CO, and UBACYT 20021030100481BA, mfarinat@dm.uba.ar.} 
\ and Juliana Garc\'ia Galofre\thanks{Partially supported by 
PIP 11220110100800CO and UBACYT 20021030100481BA,
jgarciag@dm.uba.ar }
}
\begin{document}
\maketitle
\begin{abstract}

Given a biquandle $(X, S)$, a function $\tau$ with certain compatibility and a pair of
{\em non commutative cocyles} $f,h:X \times X\to G$ with values in a non necessarily 
commutative group $G$,
 we  give an invariant for singular knots / links. 
Given $(X,S,\tau)$, we also define a universal group $U_{nc}^{fh}(X)$ and  universal 
functions governing all 2-cocycles in $X$, and exhibit examples of computations.  
When the target group is abelian, a notion of {\em abelian cocycle pair} is given
and the ``state sum'' is defined for singular knots/links.
Computations generalizing linking number for singular knots are given.
As for virtual knots,  a ``self-linking number''  may be defined 
for singular knots.
\end{abstract}

Keywords: Singular Knots, Cocycle Invariants, Non Commutative Cocycles.

Mathematical Subject Classification 2010: 57M25, 57M27.

\section*{Introduction and preliminaries}

Following methods from \cite{FG2} and \cite{CES}-\cite{CEGS} we define two 
types of cocycles and corresponding invariants for singular knots.

A singular knot is a smooth map $f:S^1\rightarrow \R^3$ whose 
image eventually  has singularities: a finite number of double points, with transversal crossings.
A singular link is the union of singular knots, that is generically disjoint: the
intersection is finite and transversal. 
Singular links/knots may be considered  to be equivalence classes of planar 
singular knot diagrams under the
equivalence relation generated by  the
three (classical) Reidemeister moves and  the singular (RV and RIV) Reidemeister moves
depicted in Figure \ref{RM}. A singular crossing is a crossing where two strands are fused
 together.  An orientation of each circle induces an orientation
on each component of the link.
All links and knots considered
in this work will be oriented ones.

\begin{figure}[ht]
\[
\begin{array}{c}
\includegraphics[scale=0.1]{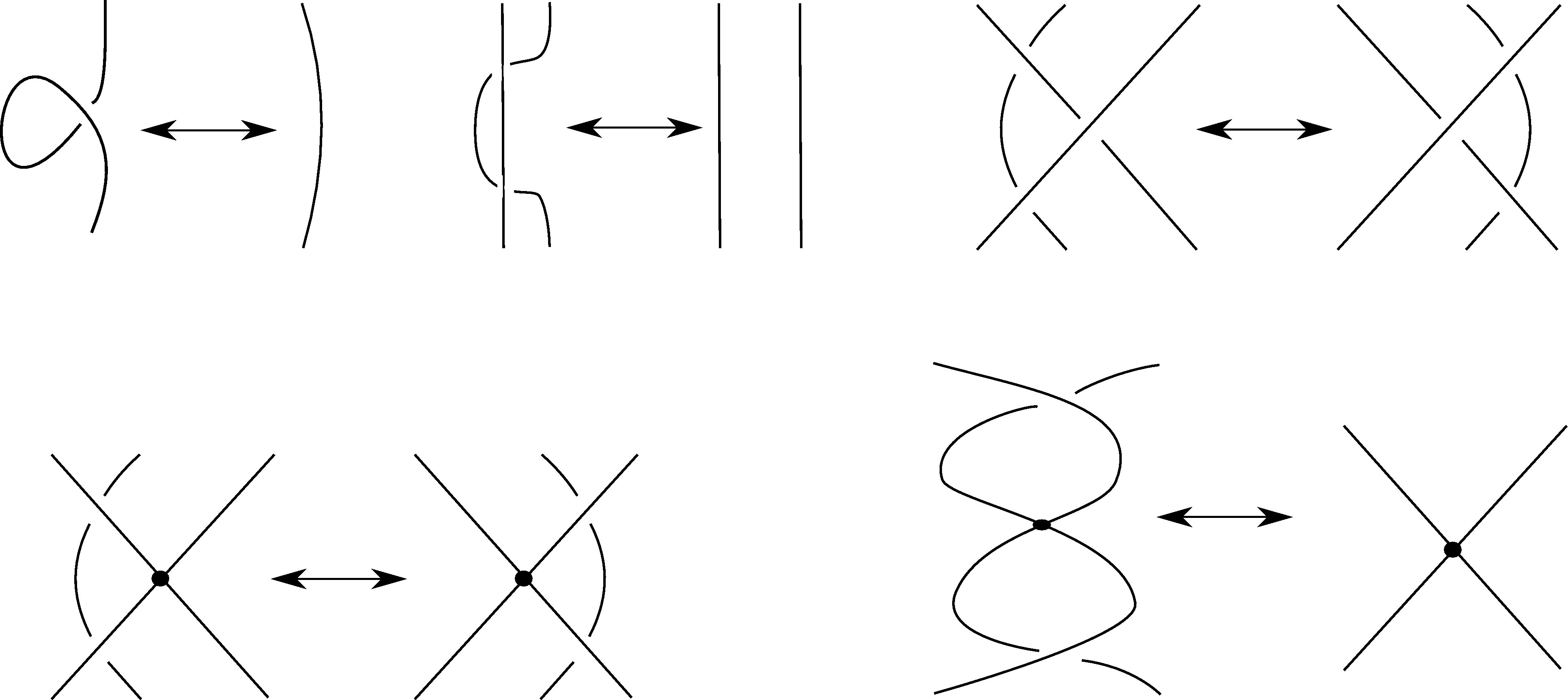}\\
   	\vspace{-.3in}
\end{array}
\]
\caption{Classical and singular Reidemeister moves RI,RII,RIII,RIV and RV.} 
\label{RM}
\end{figure}

After \cite{BEHY}, it is known that only 
 3 {\em oriented} singular Reidemeister moves are sufficient in order to generate them all. In 
\cite{BEHY} the basic singular movements are called $\Om4a$, $\Om4e$ and $\Om5a$ and correspond to our 
$oRIVb$, $oRIVa$ and (equivalent to) $oRV$ respectively (see figures \ref{oRIVb},
 \ref{oRIVa} and \ref{Om5a}). 

\begin{rem}
 In Figure \ref{RM}, the bottom left figure represents both of the following movements:
 \end{rem}

\begin{figure}[ht]   
\[
\begin{array}{c}
\includegraphics[scale=0.2]{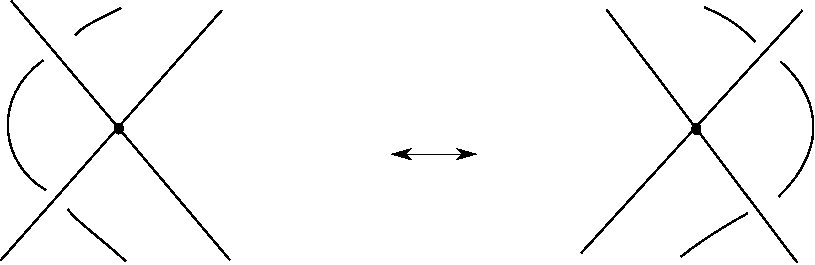}\\
   	\vspace{-.3in}
\end{array}\hspace{12mm}
\begin{array}{c}
\includegraphics[scale=0.2]{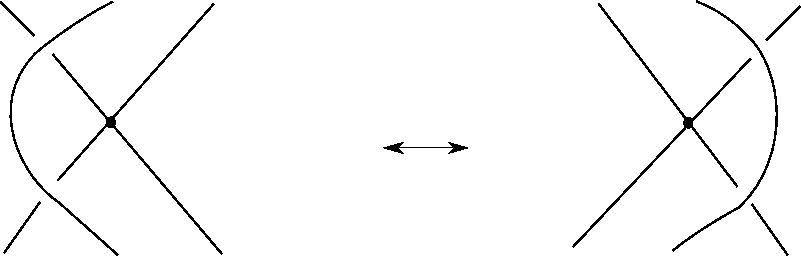}\\
   	\vspace{-.3in}
\end{array}
\]
 \caption{RIVa (on the left) and RIVb (on the right) moves. }
 \label{RVab}
 \end{figure}

\begin{figure}[H]
\[
  \xymatrix@-6pt{
  &\ar@{<-}[rdd]|(.3)\hole|(.51)\bullet&\ar@{->}[ldd]|(.3)\hole&\\
\ar@{->}@/^.83pc/[rrr]&&&\\  
&&&\\
} 
 \hspace{2mm}
\xymatrix{
\\
\equiv
}\hspace{2mm}
 \xymatrix@-6pt{
  &\ar@{<-}[rdd]|(.7)\hole&\ar@{->}[ldd]|(.49)\bullet|(.7)\hole&\\
\ar@{->}@/_.83pc/[rrr]&&&\\ &&&\\ 
} 
\hskip 1cm
  \xymatrix@-6pt{
  &\ar@{<-}[rdd]&\ar@{->}[ldd]|(.49)\bullet&\\
\ar@{->}@/^.8pc/[rrr]|(.42)\hole|(.55)\hole&&&\\  
&&&\\
}
 \hspace{2mm}
\xymatrix{
\\
\equiv
}\hspace{2mm}
 \xymatrix@-6pt{
  &\ar@{<-}[rdd]&\ar@{->}[ldd]|(.5)\bullet&\\
\ar@{->}@/_.8pc/[rrr]|(.43)\hole|(.55)\hole&&&\\  
&&&} 
\]
\caption{
\hskip 1cm 
oRIVb 
\hskip 2cm 
- 
\hskip 2cm 
oRIVa}\label{oRIVb}\label{oRIVa}
\end{figure}

\begin{figure}[H]
\[
\xymatrix@-1ex{
\ar@{->}@/^2pc/[dd]|(.14)\hole&\ar@/_2pc/[dd]|(.85)\bullet\\
&\\
&
}
\hspace{2mm}
\xymatrix{
\\
\equiv
}\hspace{2mm}
\xymatrix@-1ex{
\ar@{->}@/^2pc/[dd]|(1.16)\bullet&\ar@/_2pc/[dd]|(1.85)\hole\\
&\\
&
}
\]
\caption{$RV$}\label{Om5a}
\end{figure}

In the case of classical knot theory, coloring arcs (of projections) of 
knots ``unchanged'' by Reidemeister moves gave rise
to the definition of  quandles,  painting semiarcs led to the definition of biquandles. 
In Section 1, we generalize these definitions to color (label)
singular links/knots. The rest of the work is organized as follows:

In Section 1, after introducing the notion of singular pair, that
is the natural notion for labeling (planar diagrams of) singular knots or links,
the main result is Theorem \ref{propbialex}, that gives a full
 characterization of singular pairs when
one uses a bialexander switch $(x,y)\mapsto (sy, tx+(1-st)y)$ at classical crossings,
under the assumption that $(1-st)$ is a unit.
A consequence is  that in some cases, 
the singular switch companion to the  bialexander is necessarily a linear map
(see Theorem \ref{coro}). We end Section  1
with other types of general examples.

In Section 2 we introduce weights at crossings and the notion of non-abelian 
2-cocycle pairs. A non commutative invariant is deduced (Theorem \ref{teoinvariant})
 from the data
of a pair of cocycles $f,h$. Similarly to \cite{FG1} and \cite{FG2}, given
a set of labels $X$ and  $S,\tau:X\times X \to X\times X$ a singular pair, 
we introduce a universal group $U_{nc}^{fh}(X)$ 
factorizing  all 2-cocycles in $X$. We compute this group for some particular cases,
and the knot/link invariant that it produces. In particular, we comment 
the possible  generalizations of the notion of linking number to singular knots and links that
may be non trivial for singular knots.

In Section 3 we consider a more classical situation when the target group is abelian.
A notion of abelian cocycle pair is given
and the state sum is defined for singular knots/links. As in the non commutative case, a
  commutative group can be constructed that work as universal target for 
abelian 2-cocycle pairs and so, the state sum really depends on the choice of a singular
 pair $(S,\tau)$, and  a choice of cocycle pair is not needed, because there is always 
the universal one.

\section{Singular pairs}
First recall the set theoretical Yang-Baxter equation and the notion of 
biquandle.
\begin{defi}
A set theoretical solution of the Yang-Baxter equation is a pair
$(X,S)$ where $S:X\times X\to X\times X$ is a bijection satisfying
\[
( \id\times S)( S\times \id)( \id\times S)
=( S\times \id)( \id\times S)( S\times \id)
\]
\end{defi}
Notation:  $S(x,y)=(S^1(x,y),S^2(x,y))$ and
$S^{-1}(x,y)=\overline{S}(x,y)$. 
A solution $(X,S)$ is called
non-degenerate, or {\em birack} if in addition: 
\begin{enumerate}
 \item\label{left}
 ({\em left invertibility})
 for any $x,z\in X$ there exists a unique $y$ such that $S^1\!(x,y)=z$, 
 \item\label{right} ({\em right invertibility}) for any $y,t\in X$ there exists a unique $x$ such that $S^2(x,y)=t$.
\end{enumerate}
A birack is called {\em biquandle} if, given $x_0\in X$, there exists a unique $y_0\in X$ such that
$S(x_0,y_0)=(x_0,y_0)$. In other words, if there exists a bijective map $s:X\to X$ such
that
\[
\{(x,y):S(x,y)=(x,y)\}=
\{(x,s(x)): x\in X\}
\]
See Lemma 0.3 in \cite{FG1} for biquandle equivalent conditions.

Recall that a set $X$ with a binary operation $\t:X\times X\rightarrow X$ is 
called a {\em rack} if 
\begin{itemize}
 \item $-\t x:X\rightarrow X$ is a bijection $\forall x\in X$ and 
 \item $(x\t y)\t z=(x\t z)\t (y\t z)$ $\forall x,y,z \in X$. 
\end{itemize}
If $X$ also verifies that $x\t x=x$ then $X$ is called a {\em quandle}.  

It is clear that $(X,\t)$ is a rack if and only if
\[
S_\t(x,y):=(y,x\t y)
\]
  is a  non-degenerate set theoretical solution of the YBeq  (i.e. a birack).
Given $(X,\t)$, a rack, the birack $(X,S_\t)$ is a biquandle  if and only if $(X,\t)$ is a quandle.

\begin{defi}\label{singularpair}

Let $(X,S)$ be a biquandle and  $\tau: X\times X\to X\times X$ be a  bijective map that  
verifies  left
and right invertibility. The pair $(X, S,\tau)$ is called  {\em singular pair} 
if 
\begin{eqnarray}\tau\circ S=S\circ \tau\label{e1} \;\;\;\ \text{(coming from RV)},\\  
(S\times 1)(1\times S)(\tau\times 1)=(1\times \tau)(S\times 1)(1\times S)\label{e2} \;\;\;\ \text{(coming from RIVb)},\label{2}\\ 
(1\times S)(S\times 1)(1\times \tau)=(\tau\times 1)(1\times S)(S\times 1)\label{e3} \;\;\;\ \text{(coming from RIVa)}\label{3}
\end{eqnarray}
simultaneously holds.
\end{defi}

Equations (\ref{e2}) and (\ref{e3}) are different, take for example 
$S=\text{bialexander}(3,1,-1)$, one
can compute  the total amount of 
functions $\tau$ such that verifies (1) and (\ref{e2}), it  is different from the total amount of
 functions that verifies (1) and (\ref{e3}).
 On the other hand, if $S$ is involutive, equation (\ref{e2}) is equivalent to equation (\ref{e3}).

\begin{rem}
Given $(X, S)$ a biquandle and a bijective function $\tau$, previous definition written in elements $(x,y,z)\in X^3$ gives:
\begin{eqnarray}
\tau^1(S(x,y))=S^1(\tau(x,y)) \;\;\;\ \text{(coming from RV)}\\
\tau^2(S(x,y))=S^2(\tau(x,y)) \;\;\;\ \text{(coming from RV)}\\
\tau^1\big(S^1(x,y),S^1(S^2(x,y),z)\big)=S^1\big(x,\tau^1(y,z)\big) \;\;\;\ \text{(coming from RIVa)}\\
\tau^2\big(S^1(x,y),S^1(S^2(x,y),z)\big)=S^1\big(S^2(x,\tau^1(y,z)),\tau^2(y,z)\big) \;\;\;\ \text{(coming from RIVa)}\\
S^2\big(S^2(x,y),z\big)=S^2\big(S^2(x,\tau^1(y,z)),\tau^2(y,z)\big) \;\;\;\ \text{(coming from RIVa)}\\
S^1\big(x,S^1(y,z)\big)=S^1\big(\tau^1(x,y),S^1(\tau^2(x,y),z)\big)\;\;\;\ \text{(coming from RIVb)}\\
\tau^1\big(S^2(x,S^1(y,z)),S^2(y,z)\big)=S^2\big(\tau^1(x,y),S^1(\tau^2(x,y),z)\big) \;\;\;\ \text{(coming from RIVb)}\\
\tau^2\big(S^2(x,S^1(y,z)),S^2(y,z)\big)=S^2\big(\tau^2(x,y),z\big) \;\;\;\ \text{(coming from RIVb)}.
\end{eqnarray}
\end{rem}

\begin{rem}
The conditions for $(X,S,\tau)$ to be a singular pair are precisely the 
compatibility of the set of colorings with the oriented Reidemeister moves
(RI, RII, RIII, RIVa, RIVb and RV). In other words, given $(X,S,\tau)$ a classical pair,
the number of colorings of a link (or a knot) using $(X,S,\tau)$ is an invariant
of singular links/knots.
\end{rem}

\begin{ex}
Given a biquandle $(X,S)$, then  $(X,S,S)$ and $(X,S,S^{-1})$ are 
 singular pairs.
\end{ex}
\begin{rem}  For some biquandles these are the only possible singular pairs, see
Section \ref{sectionbialex}:
Corollary \ref{coro} and the examples after it.
\end{rem}

\begin{ex}
If $X=\{0,1\}$ and $S$=flip then there are, up to isomorphism, 2 different singular pairs:
$(X,\flip,\flip)$ and  $(X,\flip,i_2)$ 
where
$i_2$ is the nontrivial (involutive) biquandle of size 2 given by $i_2(x,y)=(y-1,x+1)$, for $x,y
\in \{0,1\}\cong \Z/2\Z$.
%
\end{ex}

\begin{ex}
If $S$=flip, then equations (2) and (3) are trivially satisfied.
The remaining condition:
\[
\tau^1(y,x)=\tau^2(x,y)
\]
\end{ex}

\begin{rem}
Given a map $\tau:X\times X\to X\times X$, the left invertibility means that, 
for any $x$, the map $\tau^1(x,-):X\to X$ is bijective,
so, to give $\tau^1$ is the same as giving a list of permutations 
$\{\tau^1(x,-)\}_{x\in X}$. Similar
 considerations for right invertibility: $\tau^2(-,y):X\to X$ is bijective for any
fixed $y\in X$. If $\#X=n$, then the set of maps 
$X\times X\to X\times X$ satisfying left and right invertibiliy condition is of cardinal
 $((n!)^n)^2$, because $\tau^1$ is determined by a list of $n$ permutations, and the
 same for $\tau^2$. Nevertheless, conditions of right and left invertibility do not imply bijectivity.
If we consider examples as above, the  restriction
$\tau^1(y,x)=\tau^2(x,y) $
 means precisely that the list of permutations determining $\tau^1$ is 
{\em the same list} as the one determining $\tau^2$.
For small $n$, one can make the list of all left and right 
invertible maps
verifying $\tau^1(y,x)=\tau^2(x,y)$ (it has  $(n!)^n$ members)
and check that very  few  give a bijective $\tau$. The following table illustrates the situation:
\[
\begin{array}{||c||c|c||c|c|||}
\hline
\# X&left-right\ invertibles& isoclasses & also\ bijective & isoclasses  \\
\hline
2     &    4&3     &   2&2\\
\hline
3    &  216&44  & 24&7\\
\hline
4&331176& 14022     &3360&169\\
\hline
\end{array}
\]

\end{rem}

\subsection{Colorings}
Let $(X,S,\tau)$ be a singular pair.
Let $L$ 
 be a singular oriented link
diagram on the plane. 
A {\em coloring} of $L$ by $X$ is a rule that assigns an element of $X$ to each semi-arc of $L$, in such a way that
for every  classical crossing (figure on the left corresponds to a positive crossing and figure on the right to a negative one)
\[
\xymatrix{
x\ar@{->}[rd]|\hole&y\ar[ld]\\
z&t
}
\hskip 2cm 
\xymatrix{
z\ar@{->}[rd]&t\ar[ld]|\hole\\
x&y
}
\]
where $(z,t)=S(x,y)$ and  in case of a singular crossing

\[
\xymatrix{
x\ar@{->}[rd]|\bullet &y\ar[ld]\\
z&t
}
\] 
where $(z,t)=\tau(x,y)$.

 Call $Col_X(L)$ the set of all possible colorings of $L$ by the singular pair $(X,S,\tau)$.

\

 Using analogous methods to the ones in \cite{HN}, the authors define {\em singular semiquandle} to color flat virtual knots and {\em virtual singular semiquandles} to color
 virtual flat singular knots.  

\begin{ex} (Singquandle from \cite{CEHN})
Let $(X,S)$ be an involutive quandle (that is, $S(x,y)=(y,x\t y)$ where $(X,\t)$ is a quandle
 satisfying  $(x\t y) \t y=x$ $\forall x,y\in X$) and a map
 $\tau(x,y)=(R_1(x,y),R_2(x,y))$ with $R_1,R_2:X\times X\to X$. 
It is straightforward to check that equations \ref{e1}, \ref{e2} and 
\ref{e3} 
leave: 
	\begin{eqnarray}
		(y \t z)\t R_2(x, z) &=& (y\t x)\t R_1(x,z) \;\;\;\text{coming from RIVa} \label{eq1}\\
		R_1(x, y)          & =&  R_2(y\t x, x) \;\;\;\text{coming from RV}\label{eq2}\\
	      R_2(x,y)   &=& 	R_1(y\t x, x)\t R_2(y\t x, x) \;\;\;\text{coming from RV} \label{eq3}\\
R_1(x\t y,z)\t y&=&R_1(x,z\t y)  \;\;\;\text{coming from RIVb} \label{eq4}\\
R_2(x\t y,z)&=&R_2(x,z\t y)\t y  \;\;\;\text{coming from RIVb} \label{eq5}	
\end{eqnarray}
which is called {\em singquandle} in \cite{CEHN}.
\end{ex}
\begin{ex}\label{ejemplorepetido}  
If $(X,\t )$ is a quandle where the operation
$\t$ is a trivial, that is
$x \t y=x$ for all $x, y \in X$ (hence $S=flip$), 
then the axioms of Definition \ref{singularpair} reduce to the condition $\tau^1(x, y) =\tau^2(y,x)$. Same as in 
Example 4.3 in \cite{CEHN}.
\end{ex}

\begin{rem}
Unlike the virtual case (see \cite{FG2}), 
where for any biquandle $(X,S)$, the pair $(X,S,\flip)$
 may be used to color a virtual knot or link, it is not always true that $(X,S,\flip)$ is a singular
pair. Moreover, to impose $\tau$=flip is a serious condition on a biquandle $S$ in order to 
get $(X,S,\flip)$ a singular pair. A simple evaluation shows the following:
\end{rem}

\begin{lem} $(X,S,\tau=\flip)$ is a singular pair if and only if $S$ simultaneously satisfies:
\[\begin{array}{rcl}
S^1(x,y)&=&S^2(y,x), \\
S^1(S^2(x,y),z)&=&S^1\big(x,z\big), \\
S^2\big(S^2(x,y),z\big)&=&S^2\big(S^2(x,z),y\big), \\
S^1\big(x,S^1(y,z)\big)&=&S^1\big(y,S^1(x,z)\big),\\
S^2(y,z)&=&S^2\big(y,S^1(x,z)\big). \\
\end{array}
\]\end{lem}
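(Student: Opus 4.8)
The plan is to specialize the eight scalar equations recorded in the Remark immediately following Definition \ref{singularpair} to the case $\tau=\flip$, where $\tau^1(a,b)=b$ and $\tau^2(a,b)=a$, and then read off which conditions on $S$ survive. Since $\flip$ is manifestly a bijection satisfying left and right invertibility, the content of the lemma lies entirely in this translation: $(X,S,\flip)$ is a singular pair exactly when those eight identities hold with $\tau=\flip$, and the task is to check that they compress to the five displayed relations.

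First I would substitute $\tau^1(a,b)=b$ and $\tau^2(a,b)=a$ into the two equations coming from RV. The first gives $S^2(x,y)=S^1(y,x)$ and the second gives $S^1(x,y)=S^2(y,x)$; these are the same relation, yielding the first displayed equation. For the three equations coming from RIVa, the $\tau^1$-equation becomes $S^1(S^2(x,y),z)=S^1(x,z)$ (the second displayed equation), the $\tau^2$-equation becomes $S^1(S^2(x,z),y)=S^1(x,y)$, which repeats the previous one after exchanging $y$ and $z$, and the purely-$S^2$ equation becomes $S^2(S^2(x,y),z)=S^2(S^2(x,z),y)$ (the third displayed equation). For the three equations coming from RIVb, the first becomes $S^1(x,S^1(y,z))=S^1(y,S^1(x,z))$ (the fourth displayed equation), while the remaining two both reduce, after relabeling, to $S^2(y,z)=S^2(y,S^1(x,z))$ (the fifth displayed equation).

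For the converse I would run the substitution backwards: if $S$ satisfies the five displayed relations then each of the eight scalar identities holds with $\tau=\flip$, i.e. the defining identities (\ref{e1})--(\ref{e3}) of a singular pair are satisfied, so $(X,S,\flip)$ is a singular pair. The computation is routine throughout; the only point demanding attention is the bookkeeping that exhibits the redundancies -- that the two RV equations coincide, that the $\tau^2$-part of RIVa duplicates its $\tau^1$-part, and that the last two RIVb equations agree after relabeling -- so that the eight a priori conditions collapse to exactly the five stated.
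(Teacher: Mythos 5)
Your proposal is correct and is exactly the ``simple evaluation'' the paper alludes to: substituting $\tau^1(a,b)=b$, $\tau^2(a,b)=a$ into the eight scalar identities of the remark following Definition \ref{singularpair} and observing that the two RV equations coincide, the $\tau^2$-part of RIVa repeats its $\tau^1$-part up to swapping $y$ and $z$, and the last two RIVb equations agree after relabeling, leaving precisely the five displayed relations. I verified each substitution and the claimed redundancies; nothing is missing.
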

In particular, if $(X,\t)$ is a quandle and $S(x,y)=(y,x\t y)$, then the first condition of  the previous lemma says
$y=y\t x$ for all $x,y$, that is true, if and only if $S=\flip$.
Nevertheless, there are nontrivial biquandle examples:
\begin{ex}
Let $s:X\to X$ be an involutive bijection, that is, a bijective map with $s^2=\id_X$. If
one defines
$S(x,y)=(sy,sx)$, then $(X,S,\flip)$ is a singular pair.
\end{ex}

\subsection{Singular pairs for  Bialexander  switch \label{sectionbialex}}

We will study singular pairs for the $S$ = bialexander switch, under the 
general assumption that $(1-st)$ is  a unit. We point out
some cases where the only possible
singular pairs are $(S,S)$ and $(S,S^{-1})$. The main result of this section 
is the following:

\begin{teo}\label{propbialex}
Let $S(x,y)=S_{s,t}(x,y)=(sy,tx+(1-st)y)$ and assume also that $(1-st)$ is a unit.
Then  $\tau:X\times X\to X\times X$ gives a singular pair $(S,\tau)$
if and only if
\[
\begin{array}{rclc}
\tau(\lambda x,\lambda y)&=&\lambda\tau(x, y)&\hbox{for } \lambda=s,t,-1\\
\tau(x,y)&=&\tau(0,y-x/s)+(x,x/s)\\
\tau(x,y)&=&\tau(x-sy,0)+(sy,y)\\
t\tau^1(0,x)&=&s\tau^2(x,0)
\end{array}
\]
holds for all $x,y\in X$.
Moreover, $\tau$ is fully determined by $\tau^1(0,x)$,  $(x\in X$).
\end{teo}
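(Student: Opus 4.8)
The plan is to feed the explicit switch $S=S_{s,t}$, which is affine-linear, into the three defining equations \eqref{e1}, \eqref{e2}, \eqref{e3} and read off the resulting component identities; because each of $(S\times\id)$ and $(\id\times S)$ is then a linear endomorphism of $X^3$, every composition can be written out in closed form. Throughout I abbreviate $c=1-st$ (a unit, as are $s,t$ by left/right invertibility) and write $\tau=(\tau^1,\tau^2)$. Since the statement is an equivalence I prove both implications, and the final clause will fall out of an explicit formula for $\tau$ obtained along the way.

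\emph{Forward direction.} Assuming $(S,\tau)$ is a singular pair, I first mine \eqref{e3}: its third component collapses (the $x$-terms cancel and $c$ is invertible) to the linear relation $\tau^2(x,y)=tx+y-t\tau^1(x,y)$, while its first two components give $\tau(sy,sz)=s\tau(y,z)$, i.e.\ equivariance for $\lambda=s$. Next I expand \eqref{e2}; setting the free variable to $0$ yields $\tau(tx,ty)=t\tau(x,y)$ ($\lambda=t$), and the general identity is a ``shear'' $\tau^1(a+sw,b+w)=\tau^1(a,b)+sw$, $\tau^2(a+sw,b+w)=\tau^2(a,b)+w$, whose specialization $a=0$ is exactly the second listed identity $\tau(x,y)=\tau(0,y-x/s)+(x,x/s)$. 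Combining the linear relation on the slice $x=0$ with this shear produces the closed form $\tau(x,y)=\big(x+g(y-x/s),\,y-tg(y-x/s)\big)$, where $g:=\tau^1(0,-)$; in particular $\tau$ is determined by $g$.

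It remains to extract the last two scalings and the remaining identities from \eqref{e1}. Evaluating the first component of \eqref{e1} at $y=0$ and using $\lambda=t$ equivariance gives $t\tau^1(0,x)=s\tau^2(x,0)$, the fourth listed identity. Substituting the closed form into the first component of \eqref{e1} in general and simplifying the second argument $tx+cy=t(x-sy)+y$ reduces, after dividing by $t$ and using $\lambda=s$ equivariance, to $g(-u)=-g(u)$; this is the $\lambda=-1$ equivariance, which then propagates to all of $\tau$ through the formula. Finally the third listed identity $\tau(x,y)=\tau(x-sy,0)+(sy,y)$ is read off directly from the closed form, completing this direction.

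\emph{Reverse direction and moreover.} Conversely, given the four identities I reconstruct $\tau$ from $g:=\tau^1(0,-)$: the third identity at $x=0$ together with the fourth and the $s,-1$ equivariances yields $\tau^2(0,w)=w-tg(w)$, and feeding the resulting boundary values $\tau(0,w)=(g(w),w-tg(w))$ into the second identity gives back the same closed form $\tau(x,y)=\big(x+g(y-x/s),\,y-tg(y-x/s)\big)$. With this formula and the equivariance $g(\lambda w)=\lambda g(w)$ for $\lambda\in\{s,t,-1\}$ in hand, verifying \eqref{e1}, \eqref{e2}, \eqref{e3} becomes direct substitution into the eight component equations. The formula also establishes the ``moreover''. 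The main obstacle is organizational rather than conceptual: keeping the three $X^3\to X^3$ compositions straight, and---crucially in the reverse direction---noticing that it is the \emph{third} identity which supplies the linear relation $\tau^2=tx+y-t\tau^1$, so that identity cannot be dropped even though it is redundant in the forward direction; the $\lambda=-1$ scaling, which surfaces only after substituting the closed form back into \eqref{e1}, is the least transparent point.
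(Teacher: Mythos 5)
Your proposal is correct and follows essentially the same route as the paper: expand the eight component equations of \eqref{e1}--\eqref{e3} for the affine switch, use that $s,t,1-st$ are units to reduce them to the homogeneity and shear identities plus the single scalar relation $t\tau^1(0,x)=s\tau^2(x,0)$, and package everything in the closed form $\tau(x,y)=\bigl(x+g(y-x/s),\,y-tg(y-x/s)\bigr)$ with $g=\tau^1(0,-)$, which is exactly the paper's $\tau_\varphi$ of Remark \ref{remtauphi}. The only differences are organizational (you obtain the $\lambda=-1$ equivariance by substituting the closed form back into \eqref{e1}, while the paper gets it from the $x=0$ specializations before assembling the closed form), and neither argument addresses bijectivity of $\tau$, which the paper also treats as a separate side condition.
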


Before the proof let us consider some consequences and examples.

\begin{rem}\label{remtauphi}
Keeping notations and hypothesis as in Theorem
\ref{propbialex},  denote $\varphi:X\to X$
 the map given by 
\[
\varphi(x):=\frac{1}{s}\ \tau^1(0,x)
\]
Because of the  non-degenerate assumption for $\tau$, we know $\varphi$ 
must be a bijective map.
The last equation of
Proposition \ref{propbialex} says that
$\tau^2(x,0)=t\varphi (x)$, and combining the second and third equation
of the same proposition, we get that necessarily $\tau$ is of the form
\[
\tau(x,y)=(x+ \varphi(sy-x),y-t\varphi(sy-x))=:\tau_\varphi(x,y)
\]
Finnally, condition $\tau(\lambda x,\lambda y)=\lambda(x,y)$ for $\lambda=s,t,-1$
is equivalent to $\varphi(\lambda(x))=\lambda \varphi(x)$ for $\lambda=s,t,-1$.

It is not difficult to see that a map $\tau_\varphi$ as above gives a singular pair
for the bialexander switch, under the only hypothesis that
$\varphi(\lambda x)=
\lambda \varphi( x)$ for $\lambda=s,t,-1$.
The proposition  above shows that when $(1-st)$ is a unit, every $\tau$ is
of the form $\tau_\varphi$.
Also, even in the case when $(1-st)$ is not necessarily
a unit, it is not hard from the  computational point of view to find
all possible bijections $\varphi$ commuting with multiplication by 
$\lambda$ ($\lambda=s,t,-1$),
and  then check if $\tau_\varphi$ is bijective or not. In other words,  this formula gives a
way to find a big family of singular pairs.
\end{rem}

\begin{teo}\label{coro}
Let $X=K$ be a finite field,  $s,t\in K^ \times$
with $t\neq s^ {-1}$ and  assume that $\{ -1,s,t\}$ generate
$K^ \times$ as multiplicative group. If $S$ is the bialexander switch associated to $s$ and 
$t$ and $\tau$ gives a singular pair for $S$ then
\begin{itemize}
\item $\tau:K^2\to K^2$ is necessarily a linear map.
\item
Denoting $a:=\tau^ 1(0,1) \in K^ \times$, we have
\[\tau^ 1(0,y)=ay,\  \tau^ 2(sx,0)=atx\]
and
\[
\tau(x,y)=
\tau_a(x,y)
=\Big(ay+\big(1-\frac{a}{s}\big)x\ , \ \frac{at}{s}x+(1-at)y \Big)
\]
\item The map $\tau_a$ is bijective if and only if $(st+1)a\neq s$.
In particular,
the map $\tau$ is necessarily  linear  and 
there are at most $|K|-1$ singular pairs for  $S_{s,t}$.
\end{itemize}
\end{teo}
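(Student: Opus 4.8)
The plan is to reduce the whole statement to the structure of the bijection $\varphi$ provided by Remark~\ref{remtauphi}. By that remark, since $(1-st)$ is a unit (which holds here because $t\neq s^{-1}$ in the field $K$), every $\tau$ giving a singular pair for $S_{s,t}$ is of the form $\tau=\tau_\varphi$, where $\varphi(x):=\frac1s\,\tau^1(0,x)$ is a bijection of $K$ satisfying $\varphi(\lambda x)=\lambda\varphi(x)$ for $\lambda\in\{-1,s,t\}$. Thus the entire theorem reduces to upgrading this ``scaling on three elements'' into genuine $K$-linearity of $\varphi$.

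The first and main step is to show $\varphi(\mu x)=\mu\varphi(x)$ for \emph{all} $\mu\in K^\times$. I would introduce $H:=\{\mu\in K^\times:\varphi(\mu x)=\mu\varphi(x)\text{ for all }x\in K\}$ and check that it is a subgroup of $K^\times$: it contains $1$; it is closed under products, since $\mu,\nu\in H$ give $\varphi(\mu\nu x)=\mu\varphi(\nu x)=\mu\nu\varphi(x)$; and it is closed under inverses, since $\varphi(x)=\varphi(\mu\,\mu^{-1}x)=\mu\varphi(\mu^{-1}x)$ forces $\varphi(\mu^{-1}x)=\mu^{-1}\varphi(x)$. As $\{-1,s,t\}\subseteq H$ generates $K^\times$ by hypothesis, $H=K^\times$.

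Setting $x=1$ then yields $\varphi(\mu)=c\mu$ for all $\mu\in K^\times$, with $c:=\varphi(1)$. Here $K\neq\F_2$ (otherwise $K^\times=\{1\}$ would force $s=t=1$, contradicting $t\neq s^{-1}$), so $K^\times$ has at least two elements and injectivity of $\varphi$ gives $c\neq0$; consequently $\varphi$ already maps $K^\times$ bijectively onto $cK^\times=K^\times$, and being a bijection of $K$ it must send $0$ to the unique leftover value, namely $\varphi(0)=0$ (one may equally derive this from $(s-1)\varphi(0)=0$ with $s\neq1$). Hence $\varphi(x)=cx$ is linear with $c=\frac1s\tau^1(0,1)=a/s$, giving $\tau^1(0,y)=s\varphi(y)=ay$ and, via $\tau^2(x,0)=t\varphi(x)$ from Remark~\ref{remtauphi}, $\tau^2(sx,0)=atx$. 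Substituting $\varphi(sy-x)=\frac a s(sy-x)$ into $\tau_\varphi(x,y)=(x+\varphi(sy-x),\,y-t\varphi(sy-x))$ produces the closed form $\tau_a$, and in particular $\tau$ is linear.

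For the last two assertions I would compute the determinant of the matrix of $\tau_a$, which is $(1-\frac a s)(1-at)-a\cdot\frac{at}{s}$; the two $a^2t/s$ contributions cancel, leaving $1-at-\frac a s=1-a\frac{st+1}{s}$, so $\tau_a$ is invertible precisely when $(st+1)a\neq s$. Finally, the parameter $a=\tau^1(0,1)$ ranges over $K^\times$ (it cannot vanish, since $\varphi$ is bijective), and distinct values of $a$ give distinct maps $\tau_a$; excluding the at most one forbidden value leaves at most $|K^\times|=|K|-1$ singular pairs. The genuinely delicate points are the edge-case bookkeeping for $\varphi(0)$ and the characteristic-two situation, together with the cancellation in the determinant; the remainder is direct substitution into the formulas of Remark~\ref{remtauphi}.
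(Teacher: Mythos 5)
Your proposal is correct and follows essentially the same route as the paper: both reduce everything to Theorem~\ref{propbialex}, use the hypothesis that $\{-1,s,t\}$ generates $K^\times$ to upgrade the three homogeneity conditions to full $K^\times$-homogeneity (hence linearity of $\tau(0,-)$, hence of $\tau$), then read off the matrix of $\tau_a$ and its determinant; your phrasing via $\varphi$ from Remark~\ref{remtauphi} is just a repackaging of the paper's direct computation with $\tau(0,x)=x\tau(0,1)$. The only quibble is the parenthetical fallback ``$(s-1)\varphi(0)=0$ with $s\neq1$'', since $s=1$ is allowed (e.g.\ the Alexander switch over $\F_3$), but your primary argument via bijectivity of $\varphi$ already covers that case.
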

\begin{proof}(of Theorem \ref{coro}, using Proposition \ref{propbialex}).
Since $\tau(  0,\lambda x)=\lambda\tau(0,x)$ for $\lambda=s,t,-1$ and
 $\langle -1,s,t\rangle=K^ \times$ if follows that
 $\tau( 0,\lambda x)=\lambda\tau(0, x)$ for $\lambda\in K^\times$. But
also $\tau(0,-x)=
-\tau(0,-x)$ implies $\tau(0,0)=(0,0)$, so 
$\tau(0,x)=x\tau(0,1)$ for all $x\in K$. 
That is, $\tau(0,-):K\to K\times K$ is $K$-linear, 
so, by the second condition of Proposition \ref{propbialex},
 $\tau$ is $K$-linear: it is given by a matrix.
Denoting $(a,b):=\tau(0,1)$,  by the second condition of 
\ref{propbialex}
we have
\[
\tau(x,y)=\tau(0,y-x/s)+(x,x/s)=
(y-x/s)(a,b)+(x,x/s)=
\Big((1-\frac{a}{s})x+ay,
\frac{1-b}{s}x+by\Big)
\]
The condition
$t\tau^1(0,x)=s\tau^2(x,0)$, for $x=1$ gives
$ta=1-b$.

 Since $\tau$ is a linear map, the bijectivity condition is controlled by
 the determinant:
\[
\det\left(
\begin{array}{cc}
1-\frac{a}{s}&a\\
\frac{at}{s}& 1-at
\end{array}\right)=1-\frac{a}{s}-at=\frac{s-a-ast}{s}=\frac{s-a(1+st)}{s}
\]
We see that it is different from zero if and only if $s\neq a(1+st)$.
\end{proof}

\begin{rem}
  $a=s$ corresponds to $\tau_a=S$ and $a=\frac{1}{t}$ corresponds to $\tau_a=S^{-1}$.
But also if $\tau$ is of the form $\tau(x,y)=(ay+bx,cx+dy)$ and $\tau$ satisfies YBeq,
then necessarily $b=0$ or $d=0$, so the only singular pairs of the form
$\tau_a$ that also satisfies YBeq are $S$ and $S^{-1}$.
\end{rem}

\begin{rem}
Let $S(x,y)=S_{s,t}(x,y)=(sy,tx+(1-st)y)$
be as in Proposition \ref{propbialex}, that is, assuming that $(1-st)$ is a unit, and assume that 
$X$ is a module over a  commutative ring containing $s$ and $t$ as  units.
If an affine map $\tau$
\[
\tau_{a,b,c,a',b',c'}(x,y)=(ax+by+c,a'x+b'y+c')
\]
(with $a,b,c,a',b',c'$ elements of the ring) gives a singular pair, then
the condition $\tau(\lambda x,\lambda y )=\lambda(x,y)$ for $\lambda=s,t,-1$ implies
$c=0=c'$, so $\tau$ is linear and the same argument
of the proof of  Corollary \ref{coro} applies, we have $\tau$ is necessarily of the form
\[
\tau(x,y)
=\Big(ay+\big(1-\frac{a}{s}\big)x\ , \ \frac{at}{s}x+(1-at)y \Big)
\]
\end{rem}

\begin{ex}
$K=\F_3=\Z/3\Z$, $s,t\in\{\pm1\}$, $s\neq t$, then the only singular pairs are $(S,S)$ and $(S,S^{-1})$,
because in this case we know there are at most $|K|-1=2$ pairs.
\end{ex}

\begin{ex}
$K=\F_4$, the field with 4 elements,  $\alpha\in \F_4$ with
$\alpha^2=\alpha+1$. Assume $s=\alpha$ and $t\neq \alpha+1=s^{-1}$.
Notice that $1+st=1-st$ is a unit, 
so the condition $(st+1)a\neq s$ is equivalent to
$a\neq \frac{s}{st+1}$. That is, $a\in \F_4\setminus\{0,\frac{s}{st+1}\}$;
we have only 2 possibilities for $a$, so again in this case the only
singular pairs are $(S,S)$ and $(S,S^{-1})$.
\end{ex}

\begin{proof}(of Proposition
\ref{propbialex})
Equations of singular pairs for $S(x,y)=(sy,tx+(1-st)y)$ are

\begin{eqnarray}
\tau^1(sy,tx+(1-st)y)&=&s\tau^2(x,y) \\
\tau^2(sy,tx+(1-st)y)&=&t\tau^1(x,y)+(1-st)\tau^2(x,y) \\
\tau^1(sy,sz)&=&s\tau^1(y,z) \label{1'}\\
\tau^2(sy,sz)&=&s \tau^2(y,z) \label{2'}\\
t(tx+(1-st)y)+(1-st)z&=&t(tx+(1-st)\tau^1(y,z))+(1-st)\tau^2(y,z) \label{3'}\\
s^2z&=&s^2z \label{s2z}\\
\tau^1(tx+(1-st)sz,ty+(1-st)z)&=&t\tau^1(x,y)+(1-st)sz \label{15}\\
\tau^2(tx+(1-st)sz,ty+(1-st)z)&=&t\tau^2(x,y)+(1-st)z .\label{16}
\end{eqnarray}
Equation \ref{s2z} is trivial. One can arrange also equations
\ref{1'} and \ref{2'} 
into a single equation
\[
\tau(sx,sy)=s\tau(x,y)
 \]
After cancelling $t^ 2x$, equation \ref{3'} becomes independent of $x$:
\[
t(1-st)y+(1-st)z=t(1-st)\tau^1(y,z)+(1-st)\tau^2(y,z) \]
but since $(1-st)$ is a unit, this is equivalent to
\[
ty+z=t\tau^1(y,z)+\tau^2(y,z) \]
Eq.
\ref{15} and \ref{16} may also be written in the form
\[\tau\big(tx+(1-st)sz,ty+(1-st)z\big)=t\tau^1(x,y)+(1-st)(sz,z)\]
Again  under the assumption $(1-st)$ being a unit, it  is equivalent to
\[\tau(tx+sz,ty+z)=t\tau(x,y)+(sz,z)\]
Notice that for $z=0$ we get $\tau(tx,ty)=t\tau(x,y)$, so it is easy to see that the above equation is equivalent to the following two:
\[\tau(x+sz,y+z)=\tau(x,y)+(sz,z)
\hbox{ and }
\tau(tx,ty)=t\tau(x,y)\]

We write again the set of equations under these simplifications:
\begin{eqnarray}
\tau^1(sy,tx+(1-st)y)&=&s\tau^2(x,y)\label{17} \\
\tau^2(sy,tx+(1-st)y)&=&t\tau^1(x,y)+(1-st)\tau^2(x,y) \label{18}\\
\tau(sx,sy)&=&s\tau(x,y) \\
tx+y&=&t\tau^1(x,y)+\tau^2(x,y) \label{20}\\
\tau(x+sz,y+z)&=&\tau(x,y)+(sz,z) \label{21}\\
\tau(tx,ty)&=&t\tau(x,y) .\label{22}
\end{eqnarray}

Notice that eq. \ref{17} says that $\tau^ 2$ is determined by $\tau^ 1$ (or vice versa).

Equation  \ref{21} for $z=-y$ or $z=-\frac{x}{s}$ gives respectively
\begin{eqnarray}
\tau(x,y)&=&\tau(x-sy,0)+(sy,y)\label{0}\\
\tau(x,y)&=&\tau(0,y-\frac{x}{s})+(x,\frac{x}{s})
\label{00}
\end{eqnarray}
So, $\tau$ is determined by $\tau(-,0)$ (and hence by $\tau^ 1(-,0)$).

Equations \ref{17} and \ref{20} for $x=0$ gives 
\begin{eqnarray}
\tau^1(sy,(1-st)y)&=&s\tau^2(0,y)\\
y&=&t\tau^1(0,y)+\tau^2(0,y)
\end{eqnarray}
and writing $x$ instead of $y$ one gets
\begin{eqnarray}
\tau^1(sx,(1-st)x)&=&s\tau^2(0,x) \label{01}\\
x&=&t\tau^1(0,x)+\tau^2(0,x)  \label{34}
\end{eqnarray}
Equation \ref{01} together with \ref{00} gives
\[
s\tau^2(0,x)=\tau^1(sx,(1-st)x)
=\tau^ 1(0,-stx)+sx
\]
hence
\[
\tau^2(0,x)=t\tau^ 1(0,-x)+x
\]
Notice that Eq. \ref{34} is $x=t\tau^1(0,x)+\tau^2(0,x)$. 
We conclude $\tau^ 1(0,-x)=-\tau^ 1(0,x)$ and consequently (use equations
\ref{0} and \ref{00})
$\tau(-x,-y)=-\tau(x,y)$.

Now  (using \ref{0} and \ref{00})  we write equations \ref{17}-\ref{22} in terms of
$\tau^ 1(0,*)$ and $\tau^2(*,0)$:
\begin{eqnarray}
\tau^1(0,tx-sty)+sy&=&s(\tau^2(x-sy,0)+y )\label{29}\\
\tau^2(-stx\!+\!s^ 2ty,\!0)\!+\!tx\!+\!(1\!-\!st)y\!&=
&\!t(\tau^1(0,y\!-\!x/s)\!+\!x   )\!+\!(1\!-\!st)(\tau^2(x-sy,\!0) \!+\!y) \label{30}\\
tx+y&=&t(\tau^1(0,y-x/s)+x)+\tau^2(x-sy,0)+y \label{31}
\end{eqnarray}
In equation \ref{30} on can simplify $tx$ and $(1-st)y$. There are also easy 
simplifications in equations \ref{29} and \ref{31}. Using also 
 that $\tau$ is $s$ and $t$ homogeneous, we get
\begin{eqnarray}
\tau^1(0,tx-sty)&=&s\tau^2(x-sy,0)\\
st\tau^2(-x + sy, 0) &=& t\tau^1(0,y - x/s) + (1 - st)\tau^2(x-sy, 0)\label{35} \\
0&=&t\tau^1(0,y-x/s)+\tau^2(x-sy,0) \label{36}
\end{eqnarray}
Equations \ref{35} and \ref{36} are actually the same equation, so we only have
\begin{eqnarray}
t\tau^1(0,x-sy)&=&s\tau^2(x-sy,0)\\
0&=&t\tau^1(0,y-x/s)+\tau^2(x-sy,0)\\
\end{eqnarray}
or equivalently
\begin{eqnarray}
t\tau^1(0,x)&=&s\tau^2(x,0)\label{1}\\
t\tau^1(0,-x)&=&-s\tau^2(x,0)
\end{eqnarray}
But in the presence of $\tau(-x,-y)=-\tau(x,y)$ this is also the same condition.
We finally conclude that, in the presence of equations \ref{0} and \ref{00}, together with 
$\tau(\lambda x,\lambda y)=\lambda\tau(x,y)$ ($\lambda=-1,s,t$), the full set of equations 
for singular pairs
is equivalent to the single  equation \ref{1}, and the claim of the proposition follows.
\end{proof}

\subsection{Computer examples}

A computer can give (just by brute force checking) 
all singular pairs $(X,S,\tau)$  for a given biquandle $(X,S)$ 
of cardinal 2 or 3, and with some time and memory, for $n=4$.
For instance, if $X=\{1,2,3\}$ and $S=\flip$, then there are (up to isomorphism) 
7 singular pairs $(X,\flip,\tau)$. Five of them
 are such that $\tau$ satisfies YBeq, four of them are actually biquandles, 
and none of them 
are given by quandles or racks, except the trivial one.
The two solutions  that {\em do  not} satisfy  the YBeq are given by
\[\begin{array}{ccccccccc}
\tau_1:\\
(1,1)&\mapsto &(1,1)\\
(2,2)&\mapsto &(3,3)&\mapsto&(2,2)\\
(1,2)&\mapsto&(3,1)&\mapsto &(3,2)&\mapsto&(1,2)\\
(1,3)&\mapsto&(2,3)&\mapsto &(2,1)&\mapsto&(1,3)\\
\\
\tau_2:\\
(1,1)&\mapsto &(2,2)&\mapsto &(3,3)&\mapsto&(1,1)\\
(1,2)&\mapsto&(1,2)\\
(1,3)&\mapsto&(3,2)&\mapsto &(3,1)&\mapsto&(2,3)&\mapsto(1,3)\\
(2,1)&\mapsto&(2,1)\\
\end{array}
\]

Given  a general biquandle $(X,S)$ with small cardinality, computational algorithms can find all possible 
$\tau$ such that  $(X,S,\tau)$ is a singular pair. Using Remark \ref{remtauphi} one can find
a family of singular pairs with bigger cardinal for bi-Alexander switch,
and thanks to Theorem
\ref{coro} we  know they are  all of this type when $(1-st)$ is a unit.
 We call $\tau_\varphi$ a switch as in Remark \ref{remtauphi}.
In  \texttt{http://mate.dm.uba.ar/\~{}mfarinat/papers/GAP/singular/} one can find
several examples implemented in GAP \cite{GAP} . We include here a table with 
some of the total amounts (as usual, we denote $D_n$ the Alexander switch 
with $s=1$, $t=-1$):
\[
\begin{array}{c}
S=\flip_n\\
\begin{array}{||c||c|c||}
\hline
n&\# of \ s.\  pairs &\#isoclasses\\
\hline
2&2&2\\
\hline
3&24&7\\
\hline
4&3360&169\\
\hline
\end{array}
\end{array}
\hskip 1cm
\begin{array}{c}
I_n=\# of\  isoclasses\ of\\  s.\  pairs\  for\  D_n\ of\  type\ \tau_\varphi\\
\begin{array}{||c||c|c|c|c|c||}
\hline
n&3&4&5&6&7\\
\hline
I_n&2&4 (all\ types=10)&6&16&20\\
\hline
\hline
n&8&9&10&11&12\\
\hline
I_n&56&136&416&776&3904\\
\hline
\end{array}
\end{array}
\]

\subsection{Examples of colorings}
\begin{ex} {\em The singular trefoil} (right) and its mirror image (left), see Figure \ref{trebolsing}.

\begin{figure}[ht]
\[
\begin{array}{c}
\includegraphics[scale=0.2]{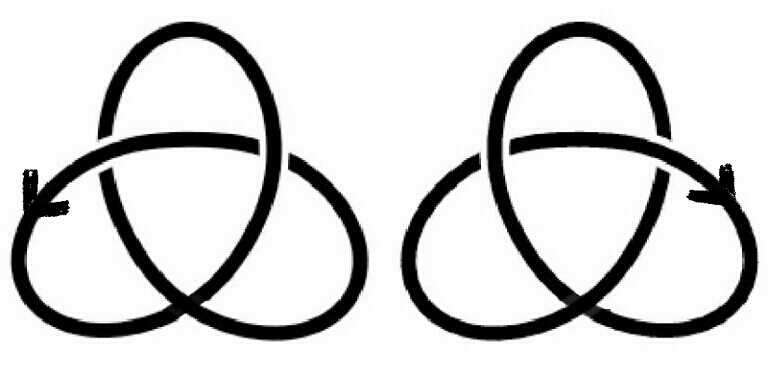}\\
\vspace{-.5in}
\includegraphics[scale=0.2]{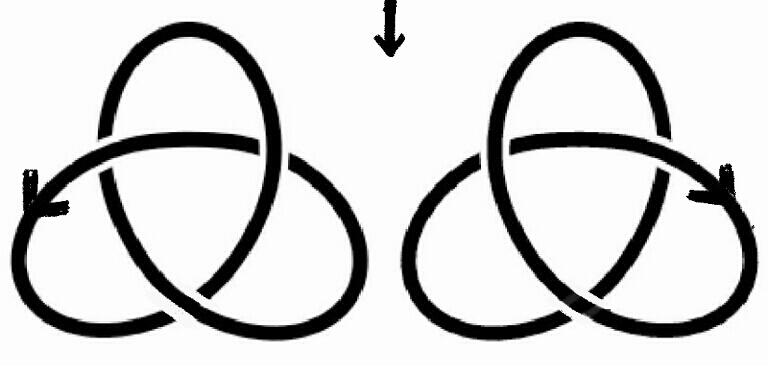}\\
\vspace{.2in}
\end{array}
\]
\caption{Singular trefoil and its mirror image} 
\label{trebolsing}
\end{figure}
Consider $(X,S)$ a biquandle and the virtual pair $p:=(S,S)$. It is clear that for the trefoil with 
negative crossings, the number of colorings equal the cardinal of the set of pairs
$\{ (x,y)\in X\times  X: S(S^{-1})^2(x,y)=(x,y)\}
=\{(x,y): (x,y)=S(x,y)\}$ while
 on the other one, the
set of colorings of the semiarcs are in 1-1  correspondence
with $\{(x,y): S^3(x,y)=(x,y)\}$.
What actually happens is that using this virtual pair, the set of colorings of the singular
knot correspond to the set of colorings of the classical knot obtained from
replacing the singular crossings by classical positive ones.
 We see  that in the first case the number of colorings is 
just the cardinal of $X$, while in the second case, the number of colorings may be different. 
For example, with the dihedral quandle of size 3 we get nontrivial colorings of the usual trefoil. 

If one uses the virtual pair $(S,S^{-1})$ then the roles of trivial or nontrivial
 colorings are interchanged.
\end{ex}

\begin{ex} {\bf The singular Hopf link}

\begin{figure}[ht]
\[
\begin{array}{c}
\includegraphics[scale=0.25]{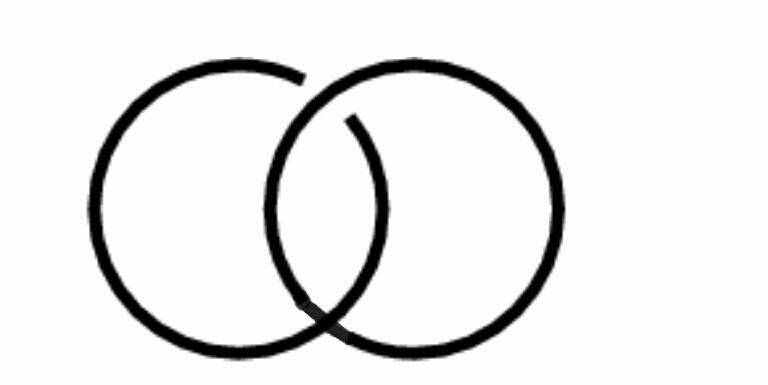}\\
\vspace{-.5in}
\end{array}
\]
\caption{Singular Hopf link} 
\label{hopfsing}
\end{figure}

Consider $X=\{1,2\}$, $S=\flip$ and $\tau=i_2$, that is, $i_2(x,y)=(y+1,x-1)$ (mod 2).
Then the set of colorings of the singular Hopf link (see Figure \ref{hopfsing}) is empty.
\end{ex}

\section{Weights \label{w} and non commutative  cocycles}

Let $(X, S, \tau)$ be a singular pair, $H$ be a group and  $f,h:X\times X\to H$ two maps. Given a coloring
on a singular knot or link, we decorate the crossing with elements of $H$ (that we call Boltzmann Weights) in the following way:
at a positive classical crossing $\gamma$, let $x_{\gamma}, y_{\gamma}$ be the color  on the incoming arcs. 
The {\it Boltzmann weight} at  $\gamma$ is 
$B_{f,h}(\gamma, \mathcal{C})=f(x_{\gamma}, y_{\gamma})$.
At a negative classical crossing $\gamma$, denote $S(x_{\gamma}, y_{\gamma})$  the colors  on the incoming  arcs.
The {\it Boltzmann weight} at $\gamma$ is 
$B_{f,h}(\gamma, \mathcal{C})=f(x_{\gamma},y_{\gamma})^{-1}$.
\[
\xymatrix@-1pc{
x_{\gamma}\ar@{->}[rdd]|\hole&y_{\gamma}\ar[ldd]\\
&\leadsto  f(x_{\gamma},y_{\gamma}	)\\
S^1\!(x_{\gamma}, y_{\gamma})&S^2(x_{\gamma},y_{\gamma})
}
\hskip 0.5cm
\xymatrix@-1pc{
S^1\!(x_{\gamma}, y _{\gamma})\ar@{->}[rdd]&S^2(x_{\gamma},y_{\gamma})\ar[ldd]|\hole\\
&\leadsto  f(x_{\gamma},y_{\gamma})^{-1}\\
x_{\gamma}&y_{\gamma}
}
\]
At a singular crossing $\gamma$, let $x_{\gamma}, y_{\gamma}$ be the color  on the incoming arcs. 
The {\it Boltzmann weight} at $\gamma$ is 
$B_{f,h}(\gamma, \mathcal{C})=h(x_{\gamma}, y_{\gamma})$.
\[
\xymatrix@-1pc{
x_{\gamma}\ar[rdd]|\bullet &y_{\gamma}\ar[ldd]\\
&\leadsto h(x_{\gamma}, y_{\gamma})&\\
\tau^1(x_{\gamma},y_{\gamma})&\tau^2(x_{\gamma},y_{\gamma})\\
}
\]

Depending on the way we choose to multiply weights,
 we arrive at two different notions of cocycles. One is
adapted for maps into general groups (e.g. non necessarily commutative) and generalizes
the non commutative cocycles for classical links given in \cite{FG1},
 and the second is only valid for maps with values
into commutative groups, its is a generalization of the state-sum procedure for biquandles 
(see \cite{CEGS}). We call respectively non-abelian 2-cocycles and abelian 2-cocycles.

\subsection{Non abelian 2-cocycle pair} 
Let $L$ be a link, $(X, S, \tau)$ a singular  pair and  $\mathcal{C}\in Col_X(L)$ be a coloring of $L$ by $X$. Call $(K_1,\dots, K_r)$ the components of $L$ and let
 $(b_1,\dots, b_r)$ be a set of base points on the components $(K_1,\dots, K_r)$.
Let $\gamma^{(i)}$, for $i=1,\dots, r$, be the (ordered) set of crossings $\gamma$ such that $\gamma$ is a  classical crossings where the under-arc
belongs to component $i$ or $\gamma$ is a virtual crossing 
of component $i$. The order of the set  $\gamma^{(i)}$ is given by the orientation of the component starting at the base point.

Notice that choosing a different base-point leads to a different product, but with the same
cyclic order, so {\em its conjugacy class} is the same.
The following definition gives the precise conditions on the maps $f$ and $h$ in 
order to make this conjugacy classes an invariant of links/knots. 

\begin{defi}\label{nc2c} The pair $(f,h)$ is called {\em non commutative 2-cocycle} if the maps $f$ and $h$ verify
\begin{itemize}
\item[(f1)] 
$f\big(x,y\big)   f\big(S^2(x,y),z\big)
 =f\big(x,S^1\!(y,z)\big) f\big(S^2(x,S^1\!(y,z)),S^2(y,z)\big) $ (due to RIII),
\item[(f2)]
$f\big(S^1\! (x, y), S^1\!(S^2(x,y),z)\big) =f\big(y,z\big) $ (due to RIII),
\item[(f3)]
$f\big(x, s(x)\big) = 1$ (due to RI),
\item[(f4)] $f(x,y)f\big(S^2(x,y),z\big)=f\big(x,\tau^1(y,z)\big)f\big(S^2(x,\tau^1(y,z)),\tau^2(y,z)\big)$ (due to RIVa),
\end{itemize}
$h$ satisfies  
\begin{itemize}
\item[(h1)] $h\big(S^1(x,y),S^1(S^2(x,y),z)\big)=h(y,z)$ (due to RIVa),
 \end{itemize}
 
 and compatibility conditions
 \begin{itemize}
 \item[(c1)] $f\big(x,S^1(y,z)\big)h\big(S^2(x,S^1(y,z)),S^2(y,z)\big)=h(x,y)f\big(\tau^2(x,y),z\big)$(due to RIVb), 
 \item[(c2)] $f(y,z)h\big(S^2(x,S^1(y,z)),S^2(y,z)\big)=h(x,y)f\big(\tau^1(x,y),S^1(\tau^2(x,y),z)\big)$ (due to RIVb).
 \item[(c3)] $h(x,y)=f(x,y)h(S(x,y))$ (due to RV), 
 \item[(c4)]  $h(S(x,y))=h(x,y)f(\tau(x,y))$ (due to RV).
 \end{itemize}
\end{defi}

\begin{rem}
If $f,h$ is a cocycle pair, then condition (c3) implies
\[
f(x,y)=h(x,y)h(S(x,y))^{-1}.
\]
In particular, $f$ is determined by $h$, so for instance if $h\equiv 1$ then $f\equiv 1$ as 
well. One may think of $h$ as a kind of ``square root'' of $f$. One may also
keep the formula $f(x,y)=h(x,y)h(S(x,y))^{-1}$ and write all others in terms 
only of $h$. For some equations  this is not particularly enlightening
and we will continue writing everything in terms of $h$ and $f$. However, there are 
two simplifications:
\end{rem}

\begin{lem}
Equation (c3) implies (f3) and equation (c3) together with  (h1) imply (f2).
\end{lem}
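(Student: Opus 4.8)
The plan is to verify the two implications directly from the defining equations in Definition \ref{nc2c}, since each claimed implication is a short algebraic manipulation rather than anything requiring a clever idea. Recall that (c3) reads $h(x,y)=f(x,y)h(S(x,y))$, and recall that $s:X\to X$ is the bijection satisfying $S(x,s(x))=(x,s(x))$ that comes with the biquandle structure.

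First I would prove that (c3) implies (f3). Evaluating (c3) at the pair $(x,s(x))$ and using the fixed-point property $S(x,s(x))=(x,s(x))$ gives $h(x,s(x))=f(x,s(x))\,h(S(x,s(x)))=f(x,s(x))\,h(x,s(x))$. Since $h$ takes values in a group, $h(x,s(x))$ is invertible, so multiplying on the right by $h(x,s(x))^{-1}$ cancels it and yields $f(x,s(x))=1$, which is exactly (f3).

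Next I would prove that (c3) together with (h1) implies (f2). The idea is to apply (c3) at two different pairs and compare, using (h1) to match the surviving $h$-terms. Concretely, write (c3) as $f(x,y)=h(x,y)\,h(S(x,y))^{-1}$, the form already recorded in the preceding Remark. Then the left-hand side of (f2), namely $f\bigl(S^1(x,y),S^1(S^2(x,y),z)\bigr)$, becomes $h\bigl(S^1(x,y),S^1(S^2(x,y),z)\bigr)\,h\bigl(S(S^1(x,y),S^1(S^2(x,y),z))\bigr)^{-1}$, while the right-hand side $f(y,z)$ becomes $h(y,z)\,h(S(y,z))^{-1}$. Applying (h1) directly identifies the first factors, $h\bigl(S^1(x,y),S^1(S^2(x,y),z)\bigr)=h(y,z)$. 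It then remains to check that the two inverse factors agree, i.e. that $h\bigl(S(S^1(x,y),S^1(S^2(x,y),z))\bigr)=h(S(y,z))$; this follows by applying (h1) a second time after rewriting the argument $S(S^1(x,y),S^1(S^2(x,y),z))$ using the Yang--Baxter relation for $S$, so that it again appears as an image under the map on the left side of (h1) with base data $S(y,z)$.

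The main obstacle I expect is purely bookkeeping: the second identification requires tracking the components $S^1,S^2$ through the braid relation $(\id\times S)(S\times\id)(\id\times S)=(S\times\id)(\id\times S)(S\times\id)$ to confirm that the compound argument of $h$ really matches the form to which (h1) applies. There is no conceptual difficulty, only the risk of a misaligned index, so I would carry out this index-matching carefully and otherwise treat both implications as routine group-theoretic cancellations.
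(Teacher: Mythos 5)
Your proposal is correct and follows essentially the same route as the paper: (f3) comes from evaluating (c3) at the fixed pair $(x,s(x))$ and cancelling $h(x,s(x))$, and (f2) comes from writing $f=h\cdot(h\circ S)^{-1}$ and applying (h1) twice, the second time after using the Yang--Baxter relation to show that $S$ commutes with the map $(y,z)\mapsto\bigl(S^1(x,y),S^1(S^2(x,y),z)\bigr)$. The paper carries out exactly this last commutation step (diagrammatically), which you correctly flag as the only point requiring careful bookkeeping.
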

\begin{proof}
We know that $S(x,s(x))=(x,s(x))$, so (c3) for $(x,y)=(x,s(x))$ gives
\[
h(x,s(x))=f(x,s(x))h(x,s(x))
\]
and so $f(x,s(x))=1$. For the other implication, it is convenient to observe first that

\[
h\big(S^1(x,y),S^1(S^2(x,y),z)\big)=h(y,z)
\]
is equivalent to
\[
h\big(S\big(S^1(x,y),S^1(S^2(x,y),z)\big) \big)=h( S(y,z))
\]
This is because, for a fixed $x$, the map
\[
X\times X\to X\times X\]
\[
(y,z)\mapsto
\big(S^1(x,y),S^1(S^2(x,y),z)\big) \]
is $S$-invariant.
A diagrammatic proof is the following:
\[
\xymatrix@-2px{
x\ar@{-}[d] &&y'\ar@{-}[d]|\hole&z'\ar@{-}@/^/[dll]\\
x\ar@{-}[dr]|\hole&y\ar@{-}[dl]&z\ar@{-}@/^.5ex/[ddll]
_>>>>>>>>>>>{  S^1(S^2(x,y),z)}
|(.72)\hole\\
S^1(x,y)\ar@{-}[dr] & 
\ar@{-}[dr]&\\
S^1(x,y') &S^1(S^2(x,y'),z')&\\
}
\xymatrix@-0px{
&x\ar@/_/@{-}[dddrr]|(.36)\hole|(.73)\hole &y'\ar@{-}[dddl]&z'\ar@{-}@/^/[dddl]\\
&&&\\
\leftrightarrow
&&&&\\
&S^1(x,y') &
\hskip -4ex S^1(S^2(x,y'),z')
\hskip -4ex &\\
}
\]

\end{proof}

\begin{rem}
If $f,h$ is a cocycle pair, using conditions (c4) and (c3) we get
\[
f(\tau(x,y))=h(x,y)^{-1}f(x,y)^{-1}h(x,y)
=h(x,y)^{-1}h(S(x,y)).
\]
In particular, if $H$ is abelian, $f(\tau(x,y))=f(x,y)^{-1}$.
\end{rem}

\begin{teo}\label{teoinvariant}
Let $f,h:X\times X\to H$ be a n.c. 2-cocycle pair. For any (oriented)
diagram of a 
link $L$ using the singular pair $(X,S,\tau)$, each Reidemeister move
establish a bijection between the set of colorings of the diagram of the link L
before and after the Reidemeister move is applied. For each coloring of $L$, the (conjugacy class of the)
product of weights defined 
above is invariant under Reidemeister moves.
\end{teo}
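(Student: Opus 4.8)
The plan is to verify invariance one Reidemeister move at a time, exploiting the fact that every move is supported on a small disk and that both the set of admissible colorings and the product of Boltzmann weights are read off locally. For a fixed coloring $\mathcal C$, the contribution of a component $K_i$ is the ordered product $\prod_{\gamma\in\gamma^{(i)}} B_{f,h}(\gamma,\mathcal C)^{\pm1}$ read starting at the base point $b_i$; changing the base point cyclically permutes this word and hence conjugates it, which is exactly why we record only its conjugacy class. Thus it suffices to show that each move either leaves each component's product unchanged or alters it by conjugation. The coloring bijection asserted in the first sentence of the theorem is already guaranteed by the singular pair axioms \eqref{e1}, \eqref{e2}, \eqref{e3} (as noted in the remark following Definition \ref{singularpair}), so I may take it for granted and use it to transport the invariant across each move.

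First I would dispose of the two cheap moves. For RII the two crossings created have opposite sign, so along the under-component they contribute $f(x,y)$ and $f(x,y)^{-1}$ at consecutive crossings; these are adjacent in the cyclic word and cancel, the matching of their arguments being forced by the coloring rule $S^{-1}S=\id$. No cocycle hypothesis beyond the inverse-weight convention is used here. For RI the kink is a self-crossing whose incoming colors are $(x,s(x))$, the fixed point of $S$; the weight read there is $f(x,s(x))$, which is trivial by (f3), and the outgoing color equals the incoming one, so both the product and the coloring are preserved.

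Next come the genuinely local identities, where each cocycle condition is tailored to a single move. For RIII I would label the three incoming arcs $x,y,z$, write the six weights appearing before and after the move in terms of $S^1,S^2$, and read off that the product along the strand passing under twice is governed by (f1), while (f2) accounts for the single crossing that the remaining strand acquires or loses; the products along all three strands then match on the nose. The singular moves are handled the same way: RIVa yields the classical identity (f4) together with the identity (h1) for the $h$-weight carried past the double point; RIVb produces the compatibility identities (c1) and (c2), which exchange an $f$- and an $h$-weight across the crossing; and RV, where a double point slides through a classical crossing, is precisely (c3) and (c4) relating $h(x,y)$, $h(S(x,y))$ and the intervening $f$-factor. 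In each case the check is a direct substitution of the coloring relations into the two sides of the displayed equation, and the reduction lemma above (giving $(c3)\Rightarrow(f3)$ and $(c3)+(h1)\Rightarrow(f2)$) shortens the list of independent identities to be verified.

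The main obstacle is bookkeeping rather than any single deep step: one must track \emph{which} component each crossing contributes to and \emph{where} in that component's cyclic word the affected weights sit, so that a local identity such as (f1) or (c2) really lands on adjacent letters of the product. This is delicate when the strands of a move belong to different components, or to a single component traversed at different times, and it is exactly here that the conjugacy-class formulation earns its keep: a move may cyclically displace a block of the word or insert a conjugating pair, so the equality one proves is up to conjugation rather than on the nose. I would organize this by fixing, for each move, an explicit picture of the component words before and after, checking that the unaffected letters occur in the same cyclic order, and then applying the relevant condition to the short affected block.
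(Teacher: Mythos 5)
Your argument is essentially the same as the paper's: both verify invariance move by move, matching each cocycle axiom to the corresponding Reidemeister move ((f1)--(f3) for the classical moves, which the paper delegates to \cite{FG1}; (f4),(h1) for RIVa; (c1),(c2) for RIVb; (c3),(c4) for RV), reading the weight products along each strand and invoking the conjugacy-class formulation to absorb the base-point ambiguity. The proposal is correct, with only a loose phrase about RIII (no strand ``acquires or loses'' a crossing there; (f2) just says the middle strand's single under-crossing weight is unchanged).
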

\begin{proof}
For invariance under classical Reidemeister moves see \cite{FG1}.
For invariance under $RIVb$, we show a particular orientation (see Figure \ref{bncproof}). 
\begin{figure}[ht]
\[
  \xymatrix@-10pt{
  &x\ar@{->}@/_/[rdd]|(.22)\hole|(.47)\bullet&y\ar@{->}[ldd]|(.36)\hole&\\
S^1\!(x,\!S^1\!(y,\!z)\!)\ar@{<-}@/^1pc/[rr]&&z&\\  
&\tau^1\! (S^2\!(x,\!S^1\!(y,\!z)\!),\!S^2\!(y,\!z)\!)
&\tau^2\!(S^2\!(x,\!S^1\!(y,\!z)\!),\!S^2\!(y,\!z)\!)&\\
} 
\]
\[
 \xymatrix{
  &x\ar@{->}@/^/[rdd]|(.66)\hole&y\ar@{->}[ldd]|(.46)\bullet|(.69)\hole&\\
S^1\!(\tau^1\!(x,\!y),\!S^1\!(\tau^2\!(x,\!y),\!z)\!)\ar@{<-}@/_1.5pc/[rr]&&z&\\ 
&S^2\!(\tau^1\!(x,\!y),\!S^1\!(\tau^2\!(x,\!y),\!z)\!)&S^2\!(\tau^2\!(x,\!y),\!z)&\\ 
} 
\]
\caption{$RIVb$}
\label{bncproof}
\end{figure}

There will be no product of Boltzmann weights due to the horizontal line. When traveling the semiarc labeled by $x$ in each diagram
the product of weights will 
leave both sides of equation (c1) in 
Definition  \ref{nc2c}. When traveling the semiarc labeled by $y$ in each diagram the product 
of weights will 
leave both sides of equation (c2) in 
Definition  \ref{nc2c}.
The remaining orientations will leave equivalent equations. 

For invariance under $RIVa$, we show a particular orientation  (see Figure \ref{ancproof}). 

\begin{figure}[ht]
 \[
  \xymatrix@-6pt{
  &y\ar@{->}[rdd]&z\ar@{->}[ldd]|(.5)\bullet&\\
x\ar@{->}@/^.8pc/[rrr]|(.4)\hole|(.53)\hole&&&S^2(S^2(x,y),z)\\  
&\tau^1(S^1(x,y),S^1(S^2(x,y),z))&\tau^2(S^1(x,y),S^1(S^2(x,y),z))&\\
}
\]
\[
 \xymatrix@-6pt{
  &y\ar@{->}[rdd]&z\ar@{->}[ldd]|(.5)\bullet&\\
x\ar@{->}@/_.8pc/[rrr]|(.32)\hole|(.4)\hole&&&S^2(S^2(x,\tau^1(y,z)),\tau^2(y,z))\\  
&S^1(x,\tau^1(y,z))&S^1(S^2(x,\tau^1(y,z)),\tau^2(y,z))&} 
\]\caption{$RIVa$}
\label{ancproof}
\end{figure}

 When traveling the semiarc labeled by $x$ in each diagram
the product of weights will 
leave both sides of equation (f4) in 
Definition  \ref{nc2c}. When traveling the semiarc labeled by $y$ or $z$ in each diagram the product 
of weights will 
leave both sides of equation (h1) in 
Definition  \ref{nc2c}.
The remaining orientations will leave equivalent equations. 

\begin{figure}[ht]
\[
\xymatrix{
x\ar@/^0.5pc/[rd]|(.47)\bullet&y\ar@/_0.5pc/[ld]\\
\tau^1(x,y)\ar@/_0.5pc/[rd]|(.55)\hole&\tau^2(x,y)\ar@/^0.6pc/[ld]\\
&\\
} \hspace{2mm}
\xymatrix{
\\
\equiv
}\hspace{2mm}
\xymatrix{
x\ar@/^0.5pc/[rd]|(.43)\hole&y\ar@/_0.5pc/[ld]\\
S^1(x,y)\ar@/_0.5pc/[rd]|(.55)\bullet&S^2(x,y)\ar@/^0.6pc/[ld]\\
&\\
}
\]
\caption{RV}
\label{vncproof}
\end{figure}

For invariance under RV: when traveling the semiarc labeled by $x$,$y$ (see Figure \ref{vncproof}) in each diagram
the product of weights will 
leave both sides of equation (c3) and (c4) in 
Definition  \ref{nc2c} respectively.
The remaining orientations will leave equivalent equations. 

\end{proof}

 \subsection{Universal noncommutative 2-cocycle pair}

Given a singular pair $(X,S,\tau)$ we shall define a  group together with a
universal n.c. 2-cocycle pair in the following way:

\begin{defi}\label{def:unc}
Let $U_{nc}^{fh}=U_{nc}^{fh}(X,S,\tau)$ be the
 group freely generated by symbols
$(x,y)_f$   and $(x,y)_h$ with relations

\begin{itemize}
\item
$\big(x,y\big)_{\! f}   \big(S^2(x,y),z\big)_{\! f}
 =\big(x,S^1\!(y,z)\big)_{\! f} \big(S^2(x,S^1\!(y,z)),S^2(y,z)\big)_{\! f} $ 
\item
$(x,y)_{\! f}\big(S^2(x,y),z\big)_{\! f}=\big(x,\tau^1(y,z)\big)_{\! f} \big(S^2(x,\tau^1(y,z)),\tau^2(y,z)\big)_{\! f}$ 
\item
$\big(S^1(x,y),S^1(S^2(x,y),z)\big)_{\! h}=(y,z)_{\! h}$ 
 \item
 $\big(x,S^1(y,z)\big)_{\! f} \big(S^2(x,S^1(y,z)),S^2(y,z)\big)_{\! h}=(x,y)_{\! h} \big(\tau^2(x,y),z\big)_{\! f}$
 \item
 $(y,z)_{\! f}\big(S^2(x,S^1(y,z)),S^2(y,z)\big)_{\! h}=(x,y)_{\! h} \big(\tau^1(x,y),S^1(\tau^2(x,y),z)\big)_{\! f}$
 \item
 $(x,y)_{\! h}=(x,y)_{\! f} (S(x,y))_{\! h}$ 
 \item
 $(S(x,y))_{\! h}=(x,y)_{\! h}(\tau(x,y))_{\! f}$ 
 \end{itemize}

Denote $f_{xy}$ and $h_{xy}$ the class  in $U_{nc}^{fh}$
of $(x,y)_{\! f}$ and $(x,y)_{\! h}$ respectively. 
We also define $\pi_f,\pi_h:X\times X\to U_{nc}^{fh}$ by
\[
\pi_f,\pi_h\colon X\times X\to U_{nc}^{fh}\]\[
\pi_f(x,y):=f_{xy},\]\[
\pi_h(x,y):=h_{xy}
\]
\end{defi}

The following is immediate from the definitions:
\begin{teo}\label{teouncfg}
Let $(X,S,\tau)$ be a singular pair:
\begin{itemize}
\item   The pair of maps
$\pi_f,\pi_h\colon X\times X\to U_{nc}^{fh}$
is a noncommutative 2-cocycle pair.
\item 
Let $H$ be  a group and
 $f,h:X\times X\to H$ a  noncommutative 2-cocycle pair, then there exists a unique group homomorphism
$\rho :U_{nc}^{fh}\to H$ such that
$f=\rho \circ \pi_f$
and $h=\rho\circ \pi_h$
 \[
 \xymatrix{
 X\times X\ar[d]_{\pi_f}\ar[r]^f&H\\
 U_{nc}^{fh}\ar@{-->}[ru]_{ \rho}
 }\hskip 1cm
 \xymatrix{
 X\times X\ar[d]_{\pi_h}\ar[r]^h&H\\
 U_{nc}^{fh}\ar@{-->}[ru]_{ \rho}
 }\]
       \end{itemize}
\end{teo}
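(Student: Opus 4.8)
The plan is to treat this as the standard universal property of a group presented by generators and relations, so that both items fall out once the defining relations of $U_{nc}^{fh}$ are matched against the cocycle axioms of Definition \ref{nc2c}. The key observation, which I would make first, is purely bookkeeping: the seven defining relations of $U_{nc}^{fh}$ in Definition \ref{def:unc} are, read as equalities among the classes $f_{xy}=\pi_f(x,y)$ and $h_{xy}=\pi_h(x,y)$, \emph{literally} the axioms (f1), (f4), (h1), (c1), (c2), (c3) and (c4).

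For the first item I would then argue as follows. The seven axioms just listed hold for $(\pi_f,\pi_h)$ by construction, since they are the imposed relations. The only missing axioms are (f2) and (f3), which are deliberately not built into the presentation; but they come for free from the Lemma stating that (c3) implies (f3) and that (c3) together with (h1) imply (f2). As (c3) and (h1) already hold in $U_{nc}^{fh}$, so do (f2) and (f3). Hence $(\pi_f,\pi_h)$ satisfies all nine axioms and is a noncommutative $2$-cocycle pair.

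For the universal property I would let $F$ be the free group on the symbols $\{(x,y)_f,(x,y)_h : x,y\in X\}$, so that $U_{nc}^{fh}=F/N$ with $N$ the normal closure of the seven families of relators. Given a group $H$ and a cocycle pair $f,h\colon X\times X\to H$, the universal property of the free group yields a unique homomorphism $\wt{\rho}\colon F\to H$ with $\wt{\rho}((x,y)_f)=f(x,y)$ and $\wt{\rho}((x,y)_h)=h(x,y)$. Because $(f,h)$ satisfies (f1), (f4), (h1), (c1)--(c4), the image under $\wt{\rho}$ of each relator is trivial, so $N\subseteq\ker\wt{\rho}$ and $\wt{\rho}$ descends to a homomorphism $\rho\colon U_{nc}^{fh}\to H$ with $\rho(f_{xy})=f(x,y)$ and $\rho(h_{xy})=h(x,y)$, that is, $f=\rho\circ\pi_f$ and $h=\rho\circ\pi_h$. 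Uniqueness is forced because $U_{nc}^{fh}$ is generated by the $f_{xy}$ and $h_{xy}$, on which $\rho$ is completely pinned down by the two factorization requirements.

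There is essentially no serious obstacle: the whole content sits in Definition \ref{def:unc}, which was engineered precisely so that its relations coincide with the cocycle axioms, and the argument is the routine generators-and-relations recipe. The one point deserving a moment's care—and the reason the statement can truthfully be called ``immediate''—is that (f2) and (f3) are absent from the presentation and must be recovered from (c3) and (h1) through the preceding Lemma; this is exactly what lets the presentation omit them without weakening the cocycle condition verified in the first item. The two commutative diagrams in the statement are just a pictorial restatement of $f=\rho\circ\pi_f$ and $h=\rho\circ\pi_h$.
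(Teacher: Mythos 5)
Your proof is correct and is exactly the argument the paper has in mind: the paper offers no written proof beyond ``immediate from the definitions,'' and your generators-and-relations argument, including the correct identification of the seven relators with (f1), (f4), (h1), (c1)--(c4), is the standard route. Your one point of care --- recovering (f2) and (f3) from (c3) and (h1) via the preceding Lemma --- is precisely the observation that makes the paper's presentation of $U_{nc}^{fh}$ work, so nothing is missing.
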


\begin{rem} $U_{nc}^{fh}$ is functorial. That is, 
 if $\phi:(X,S,\tau)\to (Y,S',\tau')$ is a morphism of 
virtual pairs, namely $\phi$
satisfy
\[
(\phi\times\phi)S(x_1,x_2)=
S'(\phi x_1,\phi x_2),
\hskip 1cm
(\phi\times\phi)\tau(x_1,x_2)=
\tau'(\phi x_1,\phi x_2)
\]
then, $\phi$ induces a (unique) group homomorphism
$U_{nc}^{fh}(X)\to U_{nc}^{fh}(Y)$
satisfying
\[
f_{x_1x_2}\mapsto
f_{\phi x_1 \phi x_2}
\hskip 1cm 
h_{x_1x_2}\mapsto
h_{\phi x_1 \phi x_2}
\]
\end{rem}

\subsection{Some particular cases}

\subsubsection{Non abelian cocycles with trivial $f$}
If $f\equiv 1$ then it is a straightforward to see that $(f,h)$ is a 2-cocycle pair if and only if
$h$ satisfies
\begin{itemize}
\item[(h1)] $h\big(S^1(x,y),S^1(S^2(x,y),z)\big)=h(y,z)$
 \item[(c1)] $h\big(S^2(x,S^1(y,z)),S^2(y,z)\big)=h(x,y)$ (c2 is the same equation), 
 \item[(c3)] $h(x,y)=h(S(x,y))$ (c4 is the same equation), 
 \end{itemize}
Notice that there is no condition involving $h$ together with $\tau$.
If in addition  $S(x,y)=(y,x\t y)$ then (h1) is trivial and the others  are
\begin{itemize}
 \item[(c1)] $h\big(x\t z,y\t z\big)=h(x,y)$ 
 \item[(c3)] $h(x,y)=h(y, x\t y)$
 \end{itemize}

\begin{ex} Take a singular link with two connected components.
Consider  $S=\tau=\flip$, $f\equiv 1$ 
and $h$ such that $h(x,y)=h(y,x)$ (as in \ref{ejemplorepetido}). Take for example $X=\{1,2\}$, so, any map $h:X\times X\to H$ with trivial $f$ (giving a 2-cocycle)
factors through 
\[
h:X\times X\to Free(a,b,c)
\]
where $h(1,1)=a$, $h(2,2)=b$, $h(1,2)=h(2,1)=c$. For a coloring with color 1 in one 
connected component and color 2 the second component, the invariant gives
products of $a$'s and $c$'s for the first component (in some cyclic order) and
 products of $b$'s and $c$'s (in a given cyclic order) for the second connected component. 
If one considers the abelianization, the exponent of $a$ gives twice the number of singular 
self intersections of the first component, the exponent of $b$ gives twice the number of
 singular self intersection of the second  component, and the exponent of $c$ gives the 
number of singular intersections of the first component with the second one. 

\end{ex}
\subsubsection{A particular virtual pair: $(X, S, S)$ with $S^2=\id$}

\begin{ex}
Take $S:X\times X\to X\times X$ an involutive (i.e. $S^2=\id$) solution
of the set theoretical Yang-Baxter equation and consider the virtual pair $(X,S,S)$. 
Denote $U:=U_{nc}^{fh}$ and $U_{ab}=U/[U,U]$ its abelianization. Then
$U_{ab}\cong Free\{(X\times X)/\sim\}_{ab}$  the free abelian group generated by 
$(X\times X)/\sim$ (written multiplicatively),  where $\sim$ is the equivalent 
relation given by
\[
(y,z)\sim (S^1(x,y),S^1(S^2(x,y),z))
\]
Moreover, $h:X\times X\to U_{ab}$ is given by
\[
h(x,y)=\ra{(x,y)}\]
and
\[f(x,y)=\ra{(x,y)}\cdot  \ra{S(x,y)}^{\ -1}\]
\end{ex}
\begin{proof}
First note that, when $S^2=\id$,  the equivalence relation
\[
(y,z)\sim (S^1(x,y),S^1(S^2(x,y),z))\ \forall x
\]
is the same as
\[
(S^2(x,S^1(y,z)),S^2(y,z))\sim (x,y)\ \forall z.\]

 Diagrammatically
 
\begin{figure}[H]
\[
\xymatrix{
x \ar@{->}@/^.5pc/[drr]|(.4)\hole|(.72)\hole&y\ar@{->}[dl]&z\ar@{->}[dl]\\
S^1(x ,y )& S^1(S^2(x,y),z)&
}
\]
\end{figure}
represents $(\id\times S)(S\times \id)(x,y,z)$.
Applying $(S\times \id)(\id\times S)$ and using twice that $S^2=\id$ leads to
\begin{figure}[H]
\[
\xymatrix{
\ar@/^15ex/[ddd]|(.16)\hole|(.36)\hole^c&\ar@/_7ex/[ddd]|(.83)\hole_a&\ar@/_15ex/[ddd]|(.64)\hole_b&
\ar@/^13ex/[ddd]|(.18)\hole|(.32)\hole|(.68)
\hole|(.82)\hole
& &\ar@/_7ex/[ddd]&\ar@/_7ex/[ddd]&
&\ar[ddd]&\ar[ddd]&\ar[ddd]&\\
&&&&&&&&&&\\
&&&\equiv&&&&\equiv&&&&\\
&S^2(a,S^1(b,c))&S^2(b,c)&&&&&&&&
}\]

\end{figure}
\
Using this, and replacing $\tau$ by $S$, we get that the cocycle conditions are
\begin{itemize}
\item[(f1)]  $f(x,y)f(S^2(x,y),z)=f(x,S^1(y,z))f(S^2(x,S^1(y,z)),S^2(y,z))$,
\item[(f4)]  $f(x,y)f(S^2(x,y),z)=f(x,S^1(y,z))f(S^2(x,S^1(y,z)),S^2(y,z))$,
\item[(h1)]  $h(S^1(x,y),S^1(S^2(x,y),z))=h(y,z)$,
\item[(c1)]  $f(x,S^1(y,z))h(x,y)=h(x,y)f(S^2(x,y),z)$,
\item[(c2)]  $f(y,z)h(x,y)=h(x,y)f(y,z)$,
\item[(c3)]  $h(x,y)=f(x,y)h(S(x,y))$,
\item[(c4)]  $h(S(x,y))=h(x,y)f(S(x,y))$
 \end{itemize}
Again from $S^2=\id$, condition  (c4) is equivalent to
\[h(x,y)=h(S(x,y))f(x,y)\]
and because we assume  the group is commutative, this is the same as (c3), 
giving the formula $f(x,y)=h(x,y)h(S(x,y))^{-1}$.

Equation (c2) is  trivial due to commutativity. Equation
(c1) gives the condition  $f(x,S^1(y,z))=f(S^2(x,y),z)$.
But the relation
\[
(y,z)\sim (S^1(x,y),S^1(S^2(x,y),z))\]
implies
\[
(x,S^1(y,z))\sim (S^2(x,y),z)
\]
So, the proof finishes by 
noticing that, if $h$ satisfy
\[
h(S^1(x,y),S^1(S^2(x,y),z))=h(y,z)
=h(S^2(x,S^1(y,z)),S^2(y,z))\]
then also
\[
h(x,S^1(y,z))=h(S^2(x,y),z)\]
and moreover, if $f(x,y)=h(x,y)h(S(x,y))^{-1}$, then $f$ also satisfies
\[
f(S^1(x,y),S^1(S^2(x,y),z))=f(y,z)
=f(S^2(x,S^1(y,z)),S^2(y,z))\]
\[
f(x,S^1(y,z))=f(S^2(x,y),z)\]
and hence (f1), (f4) and c(3) are consequences of (c3). We conclude that the value
of $h$ in $(x,y)$ depends only in the equivalence class of $(x,y)$ under the relation $\sim$,
necessarily $f(x,y)=h(x,y)h(S(x,y))^{-1}$, and no other 
 condition is needed for the pair $(f,h)$  in order to have a 2-cocycle
pair.
\end{proof}

\begin{ex}
As sub examples one can consider $X=\{1,2\}$ and $S=\tau =\flip$, in this case the relation
$\sim$ is just the identity, $U_{ab}$ is the free abelian group on 
generators $a=(1,1)$, $b=(1,2)$, $c=(2,1)$, $d=(2,2)$, 
  and the values of $f$ and $h$ are given by
\[
\begin{array}{c||c|c|c|c}
 (x,y) &(1,1)&(1,2)&(2,1)&(2,2)\\
\hline
f&1&bc^{-1}&cb^{-1}&1\\
\hline
h&a&b&c&d\\
\end{array}
 \]
If one instead considers $X=\{1,2\}$ with $S=\tau$ given by $S(x,y)=(y-1,x+1)$ Mod 2:
\[
S(1,2)=(1,2),\ S(2,1)=(2,1)
\]
\[
S(1,1)=(2,2),\ S(2,2)=(1,1)
\]
then the equivalence classes on $X\times X$ are $(1,1)\sim (2,2)$, $(1,2)\sim (2,1)$.
The group $U_{ab}$ is the free group on two generators $a=\ra{(1,1)}$ and $b=\ra{(1,2)}$.
The table with the values of $f$ and $h$ is
\[
\begin{array}{c||c|c|c|c}
 (x,y) &(1,1)&(1,2)&(2,1)&(2,2)\\
\hline
f&1&1&1&1\\
\hline
h&a&b&b&a\\
\end{array}
 \]
\end{ex}


\subsubsection{Particular examples: $S=\flip$, $\tau(x,y)=(y+1,x-1)$, $x,y\in\Z/2\Z$: extending linking number}\label{flip-antiflip}
If $S$=flip then necessarily $\tau^1(y,x)=\tau^2(x,y)$ and
 equations for the generators of $U:=U_{nc}$ are
\begin{itemize}
\item
$(x,y)_f   (x,z)_f
 =(x,z)_f (x,y)_f $ 
\item
$(x,y)_f(x,z)_f=(x,z+1)_f (x,y+1)_f$ 
 \item
 $(x,z)_f (x,y)_h=(x,y)_h (x+1,z)_f$
 \item
 $(y,z)_f(x,y)_h=(x,y)_h (y+1,z)_f$
 \item
 $(x,y)_h=(x,y)_f (y,x)_h$ 
 \item
 $(y,x)_h=(x,y)_h(y+1,x+1)_f$. 
 \end{itemize}
If we further consider the abelianization $U/[U,U]$ we get
for free the first equation; also,   from the third and fourth 
one can cancel $(x,y)_h$, getting
\begin{itemize}
\item
$(x,y)_f(x,z)_f=(x,z+1)_f (x,y+1)_f$ 
 \item
 $(x,z)_f = (x+1,z)_f$
 \item
 $(x,y)_h=(x,y)_f (y,x)_h$ 
 \item
 $(y,x)_h=(x,y)_h(y+1,x+1)_f.$ 
 \end{itemize}
From the third equation above we get
 $(x,y)_h(y,x)_ h^{-1}=(x,y)_f$. In particular $(x,x)_f=1$, but also from 
 $(x,z)_f = (x+1,z)_f$ we necessarily get $f\equiv 1$. So,
(together with $f\equiv 1$) we get the single condition
\[
(x,y)_h= (y,x)_h.
\]
We finally obtain that the abelianization of the universal group
associated to $S$=flip and $\tau=i_2$ is the free abelian group 
on 3 generators $a$, $b$, $c$,
where $f\equiv 1$ and
\[
h(1,1)=a,\ 
h(1,2)=b=h(2,1),\ 
h(2,2)=c
\]
\begin{figure}[ht]
\[
\begin{array}{c}
\includegraphics[scale=0.25]{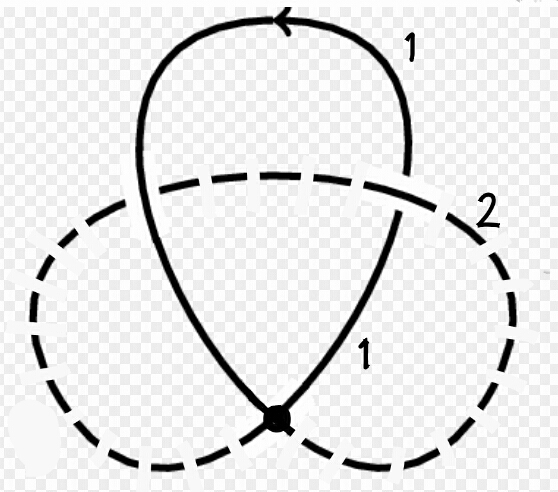}\\
\vspace{-.5in}
\end{array}
\]
\caption{Singular trefoil knot} 
\label{singtref}
\end{figure}
\begin{ex} Take the singular trefoil knot and paint it as shown in Figure \ref{singtref} with $(\Z/2\Z, S, \tau)$ as in subsection \ref{flip-antiflip}. The computation of the invariant gives $\{b^2\}$ which can be understood as a
 ``self-linking number'' similar to the case of virtual knots \cite{FG2}.
\end{ex}
\begin{ex}\label{exlinks}
Take the singular links shown in Figure \ref{nothopf}. The invariant will distinguish both links.
 Call singular hopf link the link on the right, it has  $\{cab^2,cab^2\}$ as invariant for the
 coloring shown. The invariant for  the other link, for all possible colorings, are $\{b^2,b^2\}$
 (twice) and  $\{(ca)^2,(ca)^2\}$ (twice). 
\end{ex}

\begin{figure}[ht]
\[
\begin{array}{cc}
\includegraphics[scale=0.15]{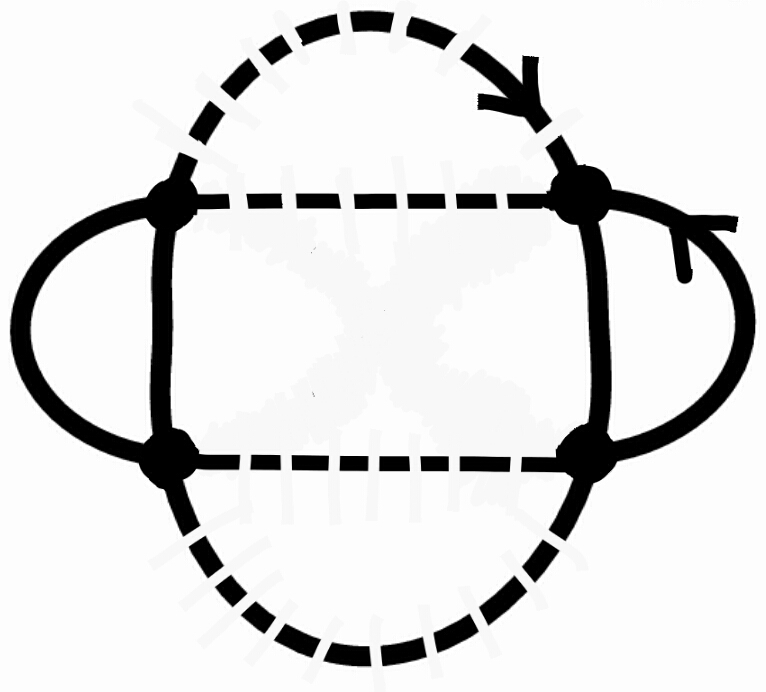}&\includegraphics[scale=0.15]{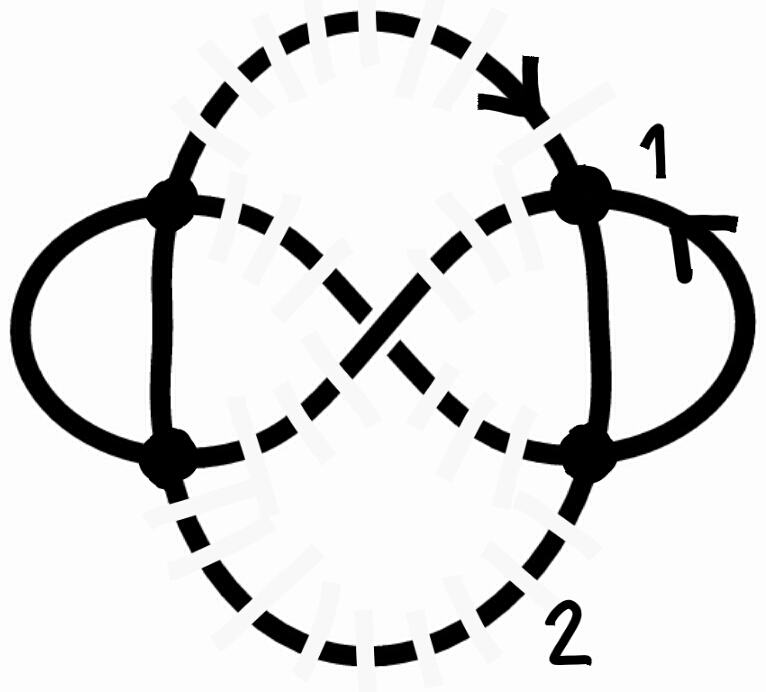}\\
\vspace{-.5in}
\end{array}
\]
\caption{4 singular crossing links} 
\label{nothopf}
\end{figure}

\section{Abelian 2-cocycle pairs and state-sum}
In this section we assume $H$ is an {\em abelian} group.  One may ignore that fact 
and consider cocycle pairs as in previous sections, or one can make 
a {\em different}  definition of 2-cocycle pair that works only in the commutative case.
We will perform this second option in this section.

 \begin{defi} 
 \label{sp} Let  $(X, S,\tau)$ be  a singular pair. 
 A pair of  functions $f,h:X\times X\rightarrow H$
is an {\em abelian  2-cocycle pair } if:
the pair $f,S$ satisfies:
\begin{itemize}
\item[(f1')] 
$ f\big(x,y\big)f\big(S^2(x,y),z\big)f\big(S^1\! (x, y), S^1\!(S^2(x,y),z)\big)\newline
 =f\big(x,S^1\!(y,z)\big)f\big(S^2(x,S^1\!(y,z)),S^2(y,z)\big)f\big(y,z\big)
$ (due to RMIII)
\item[(f2')]
$f(x, s(x))= 1$ (due to RMI) 
\end{itemize}
 and compatibility conditions between $f,h,\tau,S$:
\begin{itemize}
 \item[(c1')]  $h\big(y,z\big) f\big(x,\tau^1(y,z)\big) f\big(S^2(x,\tau^1(y,z)),\tau^2(y,z)\big)\newline
 =f\big(x,y\big) f\big(S^2(x,y),z\big) h\big(S^1(x,y), S^1(S^2(x,y),z)\big)$ (due to RIVa)
 \item[(c2')] $f(y,z) f\big(x,S^1(y,z)\big) h\big(S^2(x,S^1(y,z)),S^2(y,z)\big)\newline
 =h(x,y) f\big(\tau^2(x,y),z\big) f\big(\tau^1(x,y),S^1(\tau^2(x,y),z)\big)$ (due to RIVb)
\item[(c3')] $f(x,y) h(S(x,y))=h(x,y)f(\tau(x,y))$ (due to RV)
\end{itemize}
are satisfied for any $x, y,z \in X$.
\end{defi}   

\begin{rem}
The invariant given by non commutative cocycles were computed by traveling along
the connected components of the oriented link. Since one always meet a crossing twice, in
the non commutative case one has to choose if one considers the situation of going over or
 under arc. One of these choices gives the definition of the previous section. 
In the commutative case, one can make the simultaneous (i.e. without caring about the order)
 multiplication over all crossings, and so there is no choice of over or under arc. 
The notion of abelian cocycle pair is more symmetric, and general properties of cocycle pairs 
are very different, for instance, for non commutative cocyles, $f$ is determined by $h$,
but for abelian cocycles, $h\equiv1$ does not imply $f\equiv 1$.
\end{rem}

\begin{ex}\label{ex1}
If $(X,S)$ is a biquandle and $f:X\times X\to H$ is a biquandle (abelian) 2-cocycle
of type I, that is $f$ satisfies
\begin{itemize}
\item[(f1')] 
$ f\big(x,y\big)f\big(S^2(x,y),z\big)f\big(S^1\! (x, y), S^1\!(S^2(x,y),z)\big)\newline
 =f\big(x,S^1\!(y,z)\big)f\big(S^2(x,S^1\!(y,z)),S^2(y,z)\big)f\big(y,z\big)
$ 
\item[(f2')]
$f(x, s(x))= 1$ 
\end{itemize}
then $(X,S,S)$ is a singular pair, and $(f,h)=(f,f)$ is an abelian 2-cocycle pair. Moreover, if 
$\wt f:X\times X\to G$ is a non commutative 2-cocycle and $f$ is the composition of $f$ with the canonical
projection $G\to H:=G/[G,G]$, then $f$ is a commutative 2-cocycle, and one can construct
abelian 2-cocycles pairs in that way.
\end{ex}

\begin{ex}\label{ex2}
If $(X,S)$ is a biquandle and $f:X\times X\to H$ is an abelian 2-cocycle such that $f\circ S=f$, 
then $(X,S,S^{-1})$ is a singular pair, and $h(x,y)=f(x,y)^{-1}$ gives an abelian 2-cocycle pair.
\end{ex}

\begin{rem}
Unlike the non commutative cocycle definition, $h$ does not determine $f$. In
particular, $h\equiv 1$ does not imply $f\equiv 1$.

For instance, 
if we force $h\equiv 1$, then $f$ must satisfy
\begin{itemize}
\item[(f1')] 
$ f\big(x,y\big)f\big(S^2(x,y),z\big)f\big(S^1\! (x, y), S^1\!(S^2(x,y),z)\big)\newline
 =f\big(x,S^1\!(y,z)\big)f\big(S^2(x,S^1\!(y,z)),S^2(y,z)\big)f\big(y,z\big)$ 
\item[(f2')]
$f(x, s(x))= 1$ 
 \item[(c1')]  $ f\big(x,\tau^1(y,z)\big) f\big(S^2(x,\tau^1(y,z)),\tau^2(y,z)\big)
 =f\big(x,y\big) f\big(S^2(x,y),z\big)$
 \item[(c2')] $f(y,z) f\big(x,S^1(y,z)\big)
 = f\big(\tau^2(x,y),z\big) f\big(\tau^1(x,y),S^1(\tau^2(x,y),z)\big)$ 
\item[(c3')] $f(x,y)=f(\tau(x,y))$
\end{itemize}
\end{rem}

\begin{ex} 
For $(X,S,S)$ and $f$ satisfying (f'1) and $f\circ S=f$, then
$(f,h\equiv 1)$ is an abelian cocycle.
\end{ex}
Another special situation is the following:

\begin{ex} If  $S$=flip (recall that necessarily $\tau^1(x,y)=\tau^2(y,x)$
in this case), then 
the cocycle equations are
\begin{itemize}
\item[(f2')] $f(x, x)= 1$, 

 \item[(c1')] $ f(x,\tau^1(y,z)) f(x,\tau^2(y,z))
 =f(x,y) f(x,z)$, 

 \item[(c2')] $f(y,z) f(x,z) 
 = f(\tau^2(x,y),z) f(\tau^1(x,y),z)$, 
\item[(c3')] $f(x,y) h(y,x)=h(x,y)f(\tau(x,y))$. 
\end{itemize}
If in addition $f\equiv 1$ then the single condition for $h$ is
$h(x,y)=h(y,x)$. Also, if $\tau$=flip, then conditions (c1') and (c2') are trivial, the remaining conditions are
\[
f(x,x)=1,\ \hbox{ and }
f(x,y)h(x,y)=f(y,x)h(y,x).
\]
\end{ex}

Let $L$ be a singular link,  $(X, S,\tau)$  a singular pair, $\mathcal{C}\in Col_X(L)$ be a 
coloring of $L$ by $X$ and $f,h:X\times X\rightarrow H$
an abelian 2-cocycle pair. 
Consider the product of all Boltzmann weights associated to every crossing (classical or
 singular) in $L$:
\[
P_{\mathcal{C}}=\prod_{\gamma}B_{f,h}(\gamma,\mathcal{C})
\ \in H\]
where $B_{f,h}(\gamma,\mathcal{C})$ is  defined in the same way as in  Subsection \ref{w}.

The partition function, or state-sum (associated with $f,h$) is the expression
\[
\Phi_{f,h}(L)=\sum_{\mathcal{C}\in Col_X}P_{\mathcal{C}}
\ \in\Z[ H]
\]

\begin{teo}
The partition function defines an invariant of singular links.
\end{teo}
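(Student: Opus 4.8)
The plan is to verify invariance under each of the six oriented Reidemeister moves RI, RII, RIII, RIVa, RIVb and RV separately. The key simplification, compared with the non commutative case of Theorem~\ref{teoinvariant}, is that $H$ is abelian, so $P_{\mathcal{C}}=\prod_{\gamma}B_{f,h}(\gamma,\mathcal{C})$ is an unordered product: there is no need to travel along components, choose base points, or distinguish the over-arc from the under-arc. Consequently it suffices to argue locally. I would fix a disk in which the move takes place, note that the crossings lying outside the disk (and hence their weights) are untouched, and compare only the product of the Boltzmann weights at the crossings inside the disk, before and after the move.

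First I would recall that, since $(X,S,\tau)$ is a singular pair, every Reidemeister move induces a bijection $\mathcal{C}\leftrightarrow\mathcal{C}'$ between colorings of the diagram before and after the move (this is exactly the content of the Remark following Definition~\ref{singularpair}, and is the same bijection used in Theorem~\ref{teoinvariant}). Under this bijection the colors on the boundary of the disk are preserved, so the weights of all external crossings agree; thus $P_{\mathcal{C}}=P_{\mathcal{C}'}$ will follow once I check that the products of the internal weights coincide. Matching the two state sums term by term along this bijection then yields $\sum_{\mathcal{C}}P_{\mathcal{C}}=\sum_{\mathcal{C}'}P_{\mathcal{C}'}$, which is precisely the invariance of $\Phi_{f,h}(L)$.

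For the classical moves this reduces to the usual biquandle state-sum argument (cf.\ \cite{CEGS}): RI contributes a single weight $f(x,s(x))$, trivial by (f2'); RII introduces one positive and one negative crossing whose weights $f(x,y)$ and $f(x,y)^{-1}$ cancel in the abelian group $H$; and RIII is governed exactly by (f1'), whose two sides are the triple products of the weights at the three crossings on either side of the move. For the singular moves I would read the colors of the incoming and outgoing arcs directly off Figures~\ref{ancproof}, \ref{bncproof} and \ref{vncproof}, recalling that a singular crossing contributes an $h$-weight and a classical crossing an $f$-weight: the local weight product for RIVa is precisely the equality (c1'), for RIVb it is (c2'), and for RV it is (c3'). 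Since these cocycle conditions were imposed exactly to encode these local moves, each verification is immediate once the weights are matched.

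The one point requiring care, which I regard as the main obstacle, is the bookkeeping of orientations: each figure displays only one representative orientation, and I must confirm that every admissible orientation produces either the same cocycle equation or an equivalent one (obtained by inverting both sides, by the substitution $f\mapsto f^{-1}$ at reversed classical crossings, or by relabeling the strands). In the abelian setting this is less delicate than in Theorem~\ref{teoinvariant}, since weights commute and a negative crossing merely contributes an inverse $f$-weight, but it must still be checked that no orientation imposes a constraint independent of (f1'),(f2'),(c1'),(c2'),(c3'). Once this has been carried out for all orientations of RIVa, RIVb and RV, invariance under the remaining moves follows from the classical case, completing the proof.
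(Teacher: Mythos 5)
Your proposal is correct and follows essentially the same route as the paper: a local, move-by-move verification using the coloring bijection, with the classical moves handled by the standard biquandle state-sum argument (the paper simply cites \cite{FG1} and \cite{CJKLS}) and with RIVa, RIVb and RV matched to (c1'), (c2') and (c3') respectively via the same figures. The only difference is that you flag the check of the remaining orientations as a point requiring care, whereas the paper dispatches it with the remark that they ``give equivalent equations.''
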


\begin{rem}
From a biquandle $(X,S)$ and an abelian 2-cocycle $f:X\times X\to H$ one can extend
the state-sum invariant for classical knots or links to singular ones
by considering the singular pairs and cocycle pairs as in examples \ref{ex1} and \ref{ex2}.
\end{rem}

\begin{proof} We will check the product is invariant under Reidemeister moves. 
 Same as before, we will show one orientation for each Reidemeister move, the remaining orientations will give equivalent equations. 
 For classical Reidemeister
 moves see \cite{FG1} (for quandles see  \cite{CJKLS}).
 For singular Reidemeister moves, start with oRIVa: the product of Boltzmann weights associated to the crossings shown on the top diagram of 
 Figure \ref{ancproof} is 
 $$f\big(x,y\big) f\big(S^2(x,y),z\big) h\big(S^1(x,y), S^1(S^2(x,y),z)\big)$$
 and the product of Boltzmann weights associated to the crossings shown on the bottom  diagram of Figure \ref{ancproof}
 is 
 $$h\big(y,z\big) f\big(x,\tau^1(y,z)\big) f\big(S^2(x,\tau^1(y,z)),\tau^2(y,z)\big).$$ The equality of these two elements is given by (c1') in 
 Definition \ref{sp}.
 
 The product of Boltzmann weights associated to the crossings shown on the top diagram of 
 Figure \ref{bncproof} is 
 $$f(y,z) f\big(x,S^1(y,z)\big) h\big(S^2(x,S^1(y,z)),S^2(y,z)\big)$$
 and the product of Boltzmann weights associated to the crossings shown on the bottom  diagram of Figure \ref{bncproof}
 is 
 $$h(x,y) f\big(\tau^2(x,y),z\big) f\big(\tau^1(x,y),S^1(\tau^2(x,y),z)\big).$$ The equality of these two elements is given by (c2') in 
 Definition \ref{sp}.
 
 The product of Boltzmann weights associated to the crossings shown on the left diagram of 
 Figure \ref{vncproof} is 
 $$h(x,y)f(\tau(x,y))$$
 and the product of Boltzmann weights associated to the crossings shown on the right diagram of Figure \ref{vncproof}
 is 
 $$f(x,y)h(S(x,y)).$$ The equality of these two elements is given by (c3') in 
 Definition \ref{sp}.

\end{proof}

\begin{defi}
 \label{def:abunc}
Let $Ab^{fh}=Ab^{fh}(X,S,\tau)$ be the
{\em abelian} (multiplicative) group freely generated by symbols
$(x,y)_f$   and $(x,y)_h$ with relations

\begin{itemize}
\item[(f1')] 
$ \big(x,y\big)_{\! f}\ \big(S^2(x,y),z\big)_{\! f}\ \big(S^1\! (x, y), S^1\!(S^2(x,y),z)\big)_{\! f}\newline
 =\big(x,S^1\!(y,z)\big)_{\! f}\ \big(S^2(x,S^1\!(y,z)),S^2(y,z)\big)_{\! f}\ \big(y,z\big)_{\! f}
$ 
\item[(f2')]
$(x, s(x))_{\! f}= 1$ 
 \item[(c1')]  $\big(y,z\big)_{\! h} \ \big(x,\tau^1(y,z)\big)_{\! f}\ \big(S^2(x,\tau^1(y,z)),\tau^2(y,z)\big)_{\! f}\newline
 =\big(x,y\big)_{\! f}\ \big(S^2(x,y),z\big)_{\! f}\ \big(S^1(x,y), S^1(S^2(x,y),z)\big)_{\! h}$ 
 \item[(c2')] 
$(y,z)_{\! f} \ \big(x,S^1(y,z)\big)_{\! f}\ \big(S^2(x,S^1(y,z)),S^2(y,z)\big)_{\! h}\newline
 =(x,y)_{\! h}\ \big(\tau^2(x,y),z\big)_{\! f}\ \big(\tau^1(x,y),S^1(\tau^2(x,y),z)\big)_{\! f}$
\item[(c3')] $(x,y)_{\! f} \ (S(x,y))_{\! h}=(x,y)_{\! h}\ (\tau(x,y))_{\! f}$ 
\end{itemize}
Denote $f_{xy}$ and $h_{xy}$ the class  in $Ab^{fh}$
of $(x,y)_{\! f}$ and $(x,y)_{\! h}$ respectively. 
We also define $\pi_{\! f},\pi_h:X\times X\to Ab^{fh}$ by
\[
\pi_{\! f},\pi_h\colon X\times X\to Ab^{fh}\]
\[
\pi_{\! f}(x,y):=f_{xy},
\]
\[
\pi_h(x,y):=h_{xy}
\]
\end{defi}

The following is immediate from the definitions:
\begin{teo}\label{teoucfh}
Let $(X,S,\tau)$ be a singular pair:
\begin{itemize}
\item   The pair of maps
$\pi_f,\pi_h\colon X\times X\to Ab^{fh}$
is an abelian 2-cocycle pair.
\item 
Let $H$ be  an abelian  group and
 $f,h:X\times X\to H$ an abelian 2-cocycle pair, then there exists a unique group homomorphism
$\rho :Ab^{fh}\to H$ such that
$f=\rho \circ \pi_f$
and $h=\rho\circ \pi_h$
 \[
 \xymatrix{
 X\times X\ar[d]_{\pi_f}\ar[r]^f&H\\
Ab^{fh}\ar@{-->}[ru]_{ \rho}
 }\hskip 1cm
 \xymatrix{
 X\times X\ar[d]_{\pi_h}\ar[r]^h&H\\
 Ab^{fh}\ar@{-->}[ru]_{ \rho}
 }\]
       \end{itemize}
\end{teo}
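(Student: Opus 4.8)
The plan is to recognize Theorem \ref{teoucfh} as nothing more than the universal property of a group presentation, specialized to the category of abelian groups, exactly parallel to the non-commutative statement of Theorem \ref{teouncfg}. The group $Ab^{fh}$ was defined in Definition \ref{def:abunc} precisely by taking generators $(x,y)_f,(x,y)_h$ and imposing as relations the five families of equations (f1'), (f2'), (c1'), (c2'), (c3'). These are, symbol for symbol, the conditions of Definition \ref{sp} read with the generating symbols in place of the values of $f$ and $h$. Thus the whole proof is a matter of transporting that tautological correspondence across a homomorphism, and I expect no genuine conceptual obstacle; the only care needed is the bookkeeping that matches each relator to its companion cocycle equation.

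For the first bullet I would argue directly from the construction. By definition, the classes $f_{xy}=\pi_f(x,y)$ and $h_{xy}=\pi_h(x,y)$ in $Ab^{fh}$ satisfy the imposed relations. Reading relation (f1') of Definition \ref{def:abunc} as an identity among the elements $\pi_f(\cdot,\cdot)$ is exactly condition (f1') of Definition \ref{sp} for the pair $(\pi_f,\pi_h)$, and likewise for (f2'), (c1'), (c2'), (c3'). Hence $(\pi_f,\pi_h)$ satisfies every requirement of an abelian $2$-cocycle pair by construction, and nothing further needs to be checked.

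For the second bullet I would produce $\rho$ in the usual two steps. Let $F$ be the free abelian group on the symbols $(x,y)_f$ and $(x,y)_h$ for $x,y\in X$. Since $H$ is abelian, the assignment $(x,y)_f\mapsto f(x,y)$ and $(x,y)_h\mapsto h(x,y)$ extends uniquely to a homomorphism $\tilde\rho\colon F\to H$ by freeness. I would then verify that $\tilde\rho$ annihilates each relator: applying $\tilde\rho$ to the relator coming from (f1') yields precisely the equation (f1') satisfied by the pair $(f,h)$ because $(f,h)$ is an abelian $2$-cocycle pair, and the same holds for the relators coming from (f2'), (c1'), (c2'), (c3'). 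Therefore $\tilde\rho$ factors through the quotient $Ab^{fh}=F/R$, where $R$ is the subgroup generated by the relators, giving a homomorphism $\rho\colon Ab^{fh}\to H$ with $\rho\circ\pi_f=f$ and $\rho\circ\pi_h=h$ by construction on generators.

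Finally, uniqueness follows because $Ab^{fh}$ is generated (as a group) by the images of $\pi_f$ and $\pi_h$: any homomorphism $\rho'$ satisfying $\rho'\circ\pi_f=f$ and $\rho'\circ\pi_h=h$ must send each generator $f_{xy}$ to $f(x,y)$ and each $h_{xy}$ to $h(x,y)$, hence agrees with $\rho$ on a generating set and so equals $\rho$. The only step requiring attention, as noted above, is the term-by-term verification that each defining relator maps to a valid cocycle identity in $H$; since the relations of Definition \ref{def:abunc} were built to mirror Definition \ref{sp} exactly, this verification is routine and purely a matter of matching labels.
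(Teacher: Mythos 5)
Your proposal is correct and matches the paper's treatment: the paper states this theorem is ``immediate from the definitions,'' and your argument simply spells out the standard universal-property-of-a-presentation reasoning (generators satisfy the relations by construction; a cocycle pair kills the relators and hence factors through the quotient; uniqueness from generation). Nothing is missing.
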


\begin{rem} $Ab^{fh}$ is functorial. That is, 
 if $\phi:(X,S,\tau)\to (Y,S',\tau')$ is a morphism of 
virtual pairs, namely $\phi$
satisfy
\[
(\phi\times\phi)S(x_1,x_2)=
S'(\phi x_1,\phi x_2),
\hskip 1cm
(\phi\times\phi)\tau(x_1,x_2)=
\tau'(\phi x_1,\phi x_2)
\]
then, $\phi$ induces a (unique) group homomorphism
$Ab^{fh}(X)\to Ab^{fh}(Y)$
satisfying
\[
f_{x_1x_2}\mapsto
f_{\phi x_1 \phi x_2}
\hskip 1cm 
h_{x_1x_2}\mapsto
h_{\phi x_1 \phi x_2}
\]
\end{rem}

\begin{rem}
In general, 
 $Ab^{fh}$ is not the abelianization of $U_{nc}^{fh}$. 
\end{rem}

\begin{question}
If $(f,h)$ is a non commutative 2-cocycle and $H$ is abelian, then $(f,h)$ is a 
commutative 2-cocycle?
In virtual case, $H$ needed to have no 2-torsion.
\end{question}

\subsubsection*{Polynomial interpretation}
An easy observation
 is that if $X$ is a {\em finite} set, then for any
singular pair on $X$, the 2-cocycle pairs always takes values into {\em finitely generated}
 abelian group, simply because the universal target group $Ab^{fh}$
is finitely generated, or because one may eventually change the target group $H$ by the 
subgroup generated by the images of the cocycles $f$ and $h$.

 If $H$ is a finitely generated abelian group, then it is necessarily 
of the form
\[
H\cong T\oplus\Z^n
\]
where $T$ is abelian and finite and $n\in \N$ is the rank of $H$.
 As a consequence, the state-sum invariant takes values into 
 a Laurent Polynomial algebra, because of the 
following isomorphisms of the group algebra:
\[
\Z[H]\cong\Z[T\oplus \Z^n] 
\cong\Z[T]\ot_\Z\Z[\Z^n]\cong \Z[T]\ot_\Z\Z[z_1^{\pm 1},\dots,z_ n^{\pm 1}]
\cong \Z[T][z_1^{\pm 1},\dots,z_ n^{\pm 1}]
\]
In this way, via the state-sum invariant, any singular pair
on a finite set $X$  associates,  to any singular knot or link,
 a Laurent polynomial,
with the same number of variables as the rank of $Ab^{fh}$.

\subsubsection*{Abelian cocycles with $S=\flip$}
We will look at abelian cocycles when $S=\flip$, in order to get generalizations of the
linking number.
For $S=\flip$, the relations in the universal group are
equivalent to
\begin{itemize}
\item[(f2')]$(x, x)_f= 1$ 
 \item[(c1')]  $ (x,\tau^1(y,z))_f (x,\tau^2(y,z))_f =(x,y)_f (x,z)_f$ 
 \item[(c2')] $(y,z)_f (x,z)_f  = (\tau^2(x,y),z)_f (\tau^1(x,y),z)_f$
\item[(c3')] $(x,y)_f (y,x)_h=(x,y)_h\tau(x,y)_f$ 
\end{itemize}
Notice that if $\tau=\flip$, then  equations (c1') and (c2') are trivially satisfied. 
Now consider the case $\tau(x,y)=(sy,sx)$ where $s:X\to X$ is a bijection
satisfying $s^2=\Id$. 
The relations are
\begin{itemize}
\item[(f2')]$(x, x)_f= 1$ 
 \item[(c1')]  $ (x,sz)_f (x,sy)_f =(x,y)_f (x,z)_f$ 
 \item[(c2')] $(y,z)_f (x,z)_f  = (sx,z)_f (sy,z)_f$
\item[(c3')] $(x,y)_f (y,x)_h=(x,y)_h(sy,sx)_f$ 
\end{itemize}
for $y=sx$, notice $\tau(x,sx)=(x,sx)$ and  (c3')  gives
\[
(sx,x)_h=(x,sx)_h
\]
Also, for $x=y=z$, (c1') and (c2') gives
\[
(sx,x)^2_f=1= (x,sx)^2_f
\]
We have almost done for the following
\begin{ex}
For $X=\{1,2\}$ and $s$ the transposition $1\leftrightarrow 2$,
the universal group $Ab^{fh}$
is given by the generators
\[
u_1=(1,2)_f,\
u_2=(2,1)_f,\
a=(1,1)_h,\
b=(1,2)_h=(2,1)_h,\
c=(2,2)_h
\]
with relations $u_1^2=1=u_2^2$ (and $(1,1)_f=1=(2,2)_f$). That is, is isomorphic to
\[
U=\Z/2\Z \times\Z/2\Z\times \Z^3
\]
 The group ring
$\Z[U]$ is isomorphic to an extension of Laurent polynomial in 3 variables:
\[
\Z[U]\cong \Z[u_1,u_2,a^{\pm 1},b^{\pm 1},c^{\pm 1}]/(u_1^2=1=u_2^2)
\]
\end{ex}

\begin{ex}
Take as above  $X=\{1,2\}$ and $s$ the transposition $1\leftrightarrow 2$.
The state sum invariant for the two singular links
shown in  
Figure \ref{nothopf} gives:
\begin{itemize}
\item
$4ab^2c$ for the link on the right,
\item $2a^2c^2+2b^4$ for the link on the left.
\end{itemize}
It worth noticing that the state-sum invariant for  $X=\{1,2\}$ but $(S,\tau)=(\flip,\flip)$
 does 
not distinguish these links.
\end{ex}


\end{document}